\newcommand{\eref}[1]{(\ref{#1})}
\newtheorem{remark}{Remark}[section]
\newtheorem{lemma}{Lemma}[section]
\newtheorem{theorem}{Theorem}[section]
\newtheorem{proposition}{Proposition}[section]
\begin{document}
\thispagestyle{empty} \setcounter{page}{1}

\title[GnRH Generator Tuning]{Bifurcation-based parameter tuning in a model of the GnRH pulse and surge generator}

\author{Fr\'ed\'erique Cl\'ement \and Alexandre Vidal}

\thanks{This work is part of the large scale INRIA project REGATE (REgulation of the GonAdoTropE axis). \\
E-mail adresses : fredérique.clement@inria.fr, alexandre.vidal@inria.fr}

\maketitle

\vspace{-1cm}

\begin{center}
INRIA Paris-Rocquencourt Research Centre,

Domaine de Voluceau, Rocquencourt BP 105,

78153 Le Chesnay cedex, France
\end{center}

\begin{abstract}

We investigate a model of the GnRH pulse and surge generator, with the definite aim of constraining the model GnRH output with respect
to a physiologically relevant list of specifications. The alternating pulse and surge pattern of secretion results from the interaction
between a GnRH secreting system and a regulating system exhibiting fast-slow dynamics. The mechanisms underlying the behavior of the model
are reminded from the study of the Boundary-Layer System according to the "dissection method" principle.
Using singular perturbation theory, we describe the sequence of bifurcations undergone by the regulating (FitzHugh-Nagumo) system, encompassing
the rarely investigated case of homoclinic connexion. Basing on pure dynamical considerations, we restrict the space of parameter search for the
regulating system and describe a foliation of this restricted space, whose leaves define constant duration ratios between the surge and the pulsatility
phase in the whole system. We propose an algorithm to fix the parameter values to also meet the other prescribed ratios dealing with amplitude and
frequency features of the secretion signal. We finally apply these results to illustrate the dynamics
of GnRH secretion in the ovine species and the rhesus monkey.

\vspace{3mm}

\noindent \textbf{Key words :} Coupled Oscillators, Hysteresis, Fast-Slow Dynamics, Amplitude and Frequency Control, Ovulation,
Neuroendocrinology, GnRH Pulsatility, GnRH Surge.

\vspace{3mm}

\noindent \textbf{AMS Subject Classification :} 34C15, 34C23, 34C26, 70K44, 70K70, 92B05, 92B05, 92C31.

\end{abstract}

\section{Introduction}

In vertebrates, the reproductive system is made up of the hypothalamus, belonging to the central nervous system, the pituitary gland and the gonads (ovaries in females, testes in males). Within the hypothalamus, specific neurons secrete the GnRH (Gonadotrophin Releasing Hormone) in a pulsatile manner. This pulsatility has a fundamental role in the differential control of the secretion of both gonadotropins by the pituitary gland: LH (luteinizing hormone) and FSH (follicle stimulating hormone). In females, the pulsatile pattern is tremendously altered once per ovarian cycle into a surge which triggers LH surge and ovulation in response to increasing levels of estradiol secreted by the ovaries. The estradiol signal is conveyed to GnRH neurons through a network of interneurons. The balance between stimulatory and inhibitory signals emanating from interneurons controls the behavior of the GnRH network. 

We have proposed a mathematical model accounting for the alternating pulse and surge pattern of GnRH secretion \cite{fc-jpf_07}. The model is based on the coupling between two systems running on different time scales. We thus consider the following four-dimensional dynamical system: 
\begin{subequations}
\begin{eqnarray}
\epsilon\delta\dot{x} &=& -y+f(x)  \label{eqdotx} \\
\epsilon\dot{y} &=& a_{0}x+a_{1}y+a_{2}+cX  \label{eqdoty} \\
\epsilon\gamma\dot{X} &=& -Y+g(X)  \label{eqdotX} \\
\dot{Y} &=& b_{0}X+b_{1}Y+b_{2}  \label{eqdotY}
\end{eqnarray}
\label{Sys1}
\end{subequations}
where $f$ and $g$ are two cubic functions :
\begin{eqnarray*}
f(x) &=& \nu _{0}x^{3}+\nu _{1}x^{2}+\nu _{2}x \\
g(X) &=& \mu _{0}X^{3}+\mu _{1}X^{2}+\mu _{2}X
\end{eqnarray*}

The faster system \eref{eqdotx}-\eref{eqdoty}, henceforth named ``GnRH Secreting System'', corresponds to the average activity of GnRH neurons,
while the slower one \eref{eqdotX}-\eref{eqdotY}, named ``Regulating System'', corresponds to the average activity of regulatory neurons.
The $x,X$ variables represent the neuron electrical activities (action potential), while the $y, Y$ variables relate to ionic and secretory dynamics.
In each system, the fast and slow variables feedback on each other. The coupling between both systems is mediated through the unilateral influence
of the slow regulatory interneurons onto the fast GnRH ones ($cX$ term in equation \eref{eqdoty}). The coupling term aggregates the global balance
between inhibitory and stimulatory neuronal inputs onto the GnRH neurons. The analysis of the slow/fast dynamics exhibited within and between both
systems allows to explain the different patterns (slow oscillations, fast oscillations and periodical surge) of GnRH secretion.

This model can be used as a basis to understand the control exerted by ovarian steroids (estradiol and progesterone) on GnRH secretion,
in terms of amplitude and frequency of oscillations and discriminate a direct (on the GnRH network) from an indirect action (on the regulatory network)
of steroid feedbacks. To account accurately for this control, we have to fully understand the sequence of bifurcations corresponding to the different
phases of GnRH secretion and the occurence of an hysteresis loop. Our main goal in this paper is thus to carry on with a deeper analysis of this model,
based on singular perturbation theory, to meet precise quantitative neuroendocrinological specifications. These specifications deal with
the duration of the luteal (progesterone-dominated) and follicular (estradiol-dominated) phases of the ovarian cycle, and the ratios between (i) the surge
duration and the whole cycle duration, (ii) the pulse amplitude and the surge amplitude, and (iii) the increase in pulse frequency from the luteal to the
follicular phase.

The paper is organized as follows. In section \ref{startingmodel}, we remind the main properties of the original model and explain
the mechanisms underlying the alternation between pulsatility phases and surges of the secretion signal.
In section \ref{RegSys}, we give a precise bifurcation diagram of the Regulating System according to the parameters
$\varepsilon $, $b_{1}$ and $b_{2}$, allowing to restrict the domain of parameter values.
We prove the existence and describe the shape of a foliation of the restricted domain,
whose leaves correspond to constant duration ratios.
This novel type of control form enables us to find the parameter values fulfilling any such prescribed ratio.
Section \ref{GnRHSecrSys} deals with the influence of other parameters on the remaining specifications.
It calls to methodological developments falling into the field of weakly coupled nonlinear oscillators.
Finally, in section \ref{Appli}, we enunce in details the quantitative specifications for the ovarian cycle
in the sheep and rhesus monkey, since direct measurements of GnRH are available in these species.
In each case, we instance the search for the relevant parameter values and illustrate the resulting
GnRH secretion signal from numerical simulations.

\section{Qualitative understanding of the original model}   \label{startingmodel}

In this section, we remind the main mechanisms underlying the model behavior, which is necessary to introduce the subsequent analysis in section 3. Following \cite{fc-jpf_07}, we assume that all parameter values are positive except $\nu _{0}$ and $\mu _{0}$.

\subsection{Reparameterization}

Some parameters of \eref{Sys1} are useless since, whatever
their values, we can fix the other parameter values to obtain the same
orbit. We can first remove the $b_{0}$ parameter through a simple rescaling,
which transforms system \eref{Sys1} into:
\begin{subequations}
\begin{eqnarray}
\varepsilon ^{\prime }\delta ^{\prime }\dot{x} &=& -y+f(x) \\
\varepsilon ^{\prime }\dot{y} &=& a_{0}x+a_{1}y+a_{2}+cX \\
\varepsilon ^{\prime }\gamma \dot{X} &=& -Y+g(X) \\
\dot{Y} &=& X+b_{1}^{\prime }Y+b_{2}^{\prime }
\end{eqnarray}
\end{subequations}
with:
\begin{eqnarray*}
\varepsilon ^{\prime }=b_{0}\varepsilon ,\qquad 
b_{1}^{\prime }=\frac{b_{1}}{b_{0}},\qquad b_{2}^{\prime }=\frac{b_{2}}{b_{0}}.
\end{eqnarray*}
We fix $b_{0}=1$ from now on. We can also remove the $\gamma $ parameter since
system \eref{Sys1} also reads:
\begin{subequations}
\begin{eqnarray}
\varepsilon ^{\prime \prime }\delta ^{\prime \prime }\dot{x} &=& -y+f(x) \\
\varepsilon ^{\prime \prime }\dot{y} &=& a_{0}^{\prime \prime }x+a_{1}^{\prime
\prime }y+a_{2}^{\prime \prime }+c^{\prime \prime }X \\
\varepsilon ^{\prime \prime }\dot{X} &=& -Y+g(X) \\
\dot{Y} &=& X+b_{1}Y+b_{2}
\end{eqnarray}
\end{subequations}
with:
\begin{eqnarray*}
\varepsilon ^{\prime \prime }=\varepsilon \gamma ,\qquad
\delta ^{\prime \prime }=\frac{\delta }{\gamma },\qquad 
a_{0}^{\prime \prime }=a_{0}\gamma,\qquad 
a_{1}^{\prime \prime }=a_{1}\gamma ,\qquad 
a_{2}^{\prime \prime}=a_{2}\gamma ,\qquad 
c^{\prime \prime}=c\gamma .
\end{eqnarray*}
We will fix $\gamma =1$ in the sequel.

\subsection{Qualitative behavior of the uncoupled system }

The average activity of neurons of both regulating and secreting neurons is modeled by a classical FitzHugh-Nagumo system,
while the control of GnRH neurons activity by regulatory neurons is expressed by means of a one-way coupling of variable $X$
onto variable $y$. Hence, we can analyze separately the slow-fast subsystem \eref{eqdotX}-\eref{eqdotY}, according to the time
scale parameter $\varepsilon $ and the $Y$-nullcline parameters $b_{0}$, $b_{1}$, and $b_{2}$.

The classical approach of slow-fast systems consists in studying the ``Boundary-Layer System'' obtained after
fixing the slow dynamics at $0$ (here $\dot{Y}=0$) and focusing on the geometric invariants of the fast dynamics.
Precisely, with the time rescaling:
\begin{equation}
t=\varepsilon \tau
\label{Rescal}
\end{equation}
subsystem \eref{eqdotX}-\eref{eqdotY} and $\left( RS_{\varepsilon } \right)$:
\begin{subequations}
\begin{eqnarray}
\dot{X} &=& -Y+g(X) \\
\dot{Y} &=& \varepsilon \left( X+b_{1}Y+b_{2} \right)
\end{eqnarray}
\label{dv}
\end{subequations}
are conjugate. Posing $\varepsilon =0$ in this last system, we obtain the corresponding Boundary-Layer System:
\begin{subequations}
\begin{eqnarray}
\dot{X} &=& -Y+g(X) \label{BLSeqdotX}\\
\dot{Y} &=& 0 \label{BLSeqdotY}
\end{eqnarray}
\label{BLSFN}
\end{subequations}
This dynamics is a zero-order approximation of system \eref{eqdotX}-\eref{eqdotY} according to $\varepsilon $.
The singular perturbation theory allows us to assess where this approximation is valid.

We consider the set of singular points of subsystem \eref{BLSFN}, defined by the cubic function $Y=g(X)$, and analyze their nature
according to the fast dynamics \eref{BLSeqdotX}. The two local extrema of the cubic are non hyperbolic points.
The points on the cubic middle branch (connecting the two local extrema) are hyperbolic repulsive, while the two other
(left and right) branches consist of hyperbolic attractive points. Each branch is a normally invariant hyperbolic manifold
of subsystem \eref{BLSFN}. They persist under perturbation by the slow dynamics (i.e. for small values of $\varepsilon $)
into a $O(\varepsilon )$-close normally invariant hyperbolic manifold of \eref{eqdotX}-\eref{eqdotY}. The attractive manifolds
attract the current point, as soon as it lies within their attraction basin, so that the orbit actually enters a $O(exp(-k/\varepsilon )$ neighborhood.

In the classical case we are interested in, the values of parameters $b_{1}$ and $b_{2}$ are small enough for the $Y$-nullcline to intersect the cubic
on the middle branch at a repulsive singular point of system \eref{eqdotX}-\eref{eqdotY} and the two other intersection points to be far away, respectively
on the left and right branch. It is well-known that, in such a case, subsystem \eref{BLSFN} admits an attractive limit cycle surrounding the middle
singular point (cf Figure \ref{PhasesAccX}-a).

\begin{figure}[htb]
\centering
\includegraphics[width=13cm]{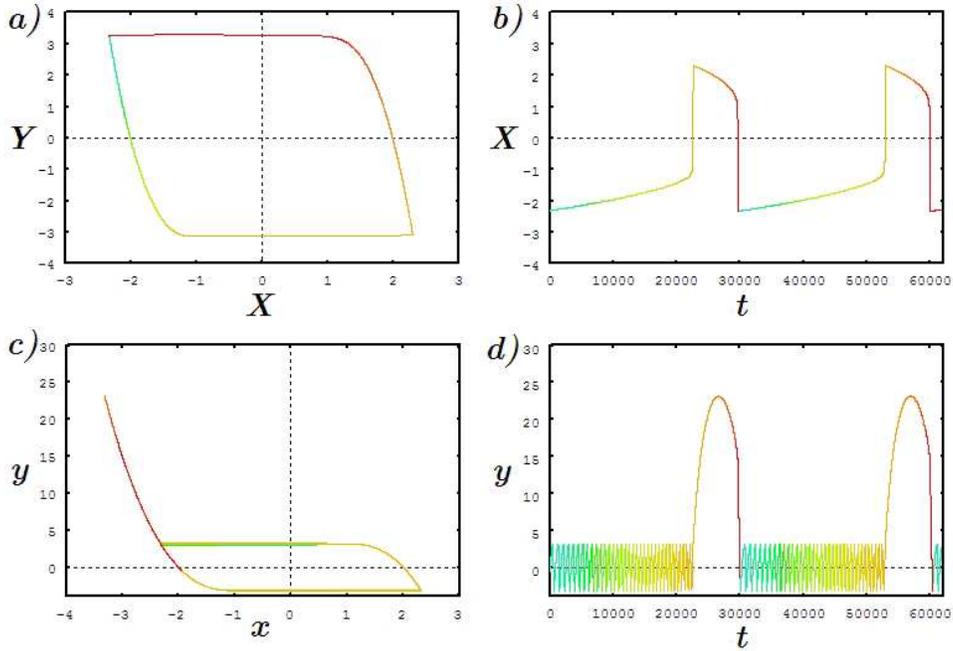}
\caption{Instance of orbit projections and signals generated by \eref{Sys1}. Each color corresponds to the same time mark in the four
panels for which the same parameterization has been used.
a) Attractive limit cycle of the Regulating System \eref{eqdotx}-\eref{eqdoty}. b) $X(t)$ output signal of the Regulating System.
c) Projection of system \eref{Sys1} orbit on the $(x,y)$-plane. d) $y(t)$ output signal of the GnRH Secreting system. \newline
The pulsatility phase duration (resp. surge duration) corresponds, within a
$O(\varepsilon )$-approximation, to the time during which $X<0$ (resp. $X>0$)
along the parameterization of the Regulating System limit cycle.}
\label{PhasesAccX}
\end{figure}

In the course of a period, the current point first comes near to, and then goes down slowly along the
left perturbed manifold (near the left branch of the cubic $Y=g(X)$), at a $O(1)$ speed. Near the extremum of the cubic (which is a non hyperbolic
singular point of \eref{BLSFN}), it keeps on going down and moves away from the cubic. At that time, it is subject to the fast motion and quickly
reaches (at a $O(1/\varepsilon )$ speed) the vicinity of the
right branch, as its $Y$-coordinate remains almost constant. Then it slowly goes up along the right perturbed manifold, before it finally comes back
quickly  near the left branch. This well-known motion is detailed in the proof of Lemma \ref{Secu}. For more details on singular perturbation theory,
we refer to \cite{fe_79} for the general study of slow-fast systems and \cite{ThVidal} for its application to oscillations.

\subsection{Qualitative behavior of the whole system}  \label{QualBehav}

We briefly remind here the specific behavior of system \eref{Sys1} and the corresponding pattern of
the $y(t)$ signal generated as a model output. We refer to \cite{fc-jpf_07} for more explanations about the model properties.

Through the coupling of \eref{eqdotx}-\eref{eqdoty} with \eref{eqdotX}-\eref{eqdotY}, the Regulating System drives the dynamics of the
GnRH Secreting System, by controling the position of its straight-line separatrix defined by:
\[
a_{0}x+a_{1}y+a_{2}+cX=0
\]
When all parameter values are fixed, the dynamics is determined by $X$ values. Let us consider $X$ as a parameter.
As its value increases, system \eref{eqdotx}-\eref{eqdoty} displays the following sequence of behaviors:
\begin{enumerate}
\item There is a unique singular point far up on the left branch.
\item An inverse saddle-node bifurcation occurs.
\item There are three singular points: a saddle far up on the left branch, an attractive point on the right branch and another saddle on its right. 
\item A supercritical Hopf bifurcation occurs.
\item One of the three singular points lies on the middle branch and is surrounded by an attractive relaxation limit cycle.
\item An inverse supercritical Hopf bifurcation occurs.
\item There are three singular points: a saddle far down on the right branch, an attractive point on the left branch and another saddle on its left.
Any orbit starting near the origin reaches the vicinity of the left branch and approaches the attractive singular point.
\item A saddle-node bifurcation occurs.
\item There is a unique singular point far down on the right branch. Any orbit starting near the origin reaches the vicinity of the left branch and goes up along.
\end{enumerate}

\begin{figure}[htb]
\centering
\includegraphics[width=14.5cm]{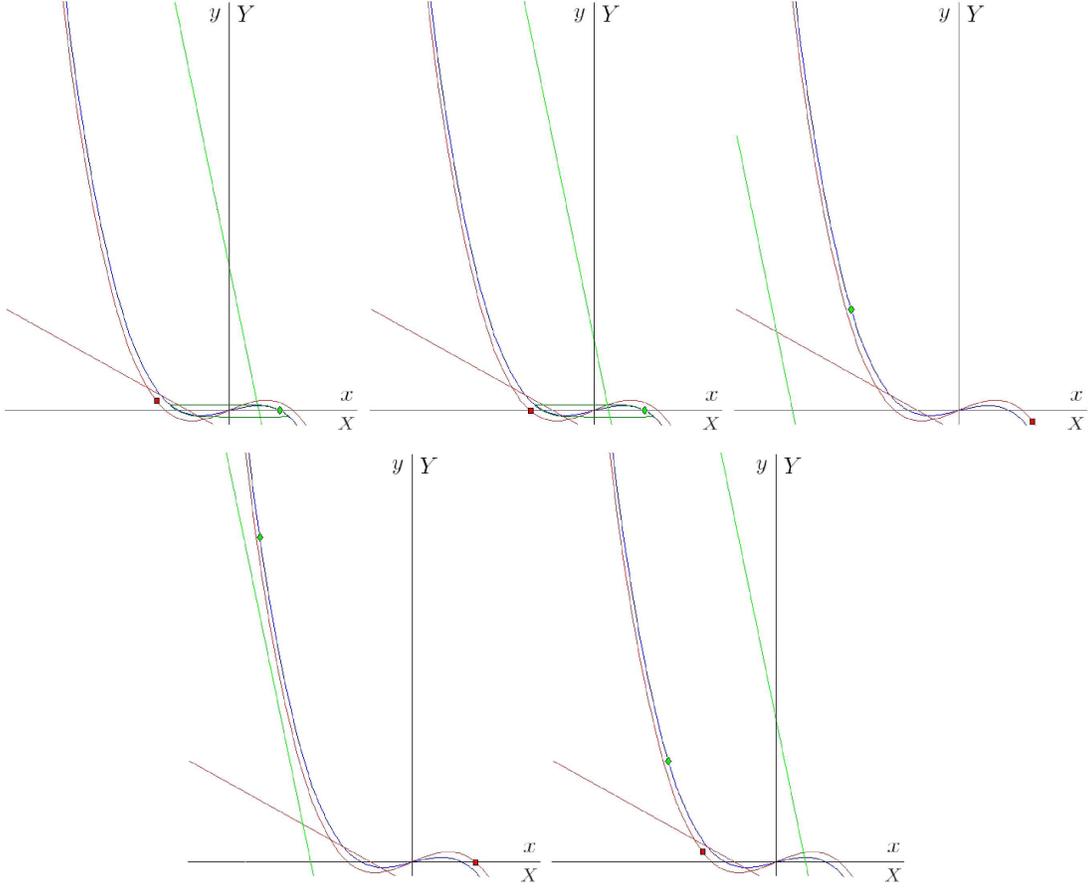}
\caption{GnRH Secreting System behavior as $(X,Y)$ describes the limit cycle of the Regulating System. We have set $Y=cst$ ($\varepsilon $ zero-order approximation).
The objects relating to the Regulating System are drawn in red or orange, those relating to the GnRH Secreting System are drawn in blue or green:
the $\dot{X}$-nullcline $Y=g(X)$ is the orange cubic, the $\dot{x}$-nullcline $y=f(x)$ is the blue cubic, the $\dot{Y}$-nullcline is the red line,
the $\dot{y}$-nullcline is the green line, the red square represents the $(X,Y)$ current point of the Regulating System, the green diamond represents
the $(x,y)$ current point of the GnRH Secreting System. \newline
a) Beginning of the pulsatility phase : the GnRH Secreting system displays the step 5) behavior of the sequence described in section \ref{QualBehav}.
b) Middle of the pulsatility phase.
c) Pulsatility to surge transition : the GnRH Secreting system displays the step 9) behavior.
d) Surge rise.
e) Surge decline : the GnRH Secreting system has come back to step 5).}
\label{Sketch}
\end{figure}

In the sequel, we will set precise conditions on the parameters such that, in the range of values taken by $X$ along the limit cycle of the
Regulating System, the GnRH Secreting System displays periodically the sub-sequence of behaviors described above from step 5 to 9 and back
from step 9 to 5. In this case, along an orbit of \eref{Sys1}, the current point $(x,y,X,Y)$ displays the following motion (represented in the panels of Figure \ref{Sketch}.

When $(X,Y)$ goes down near the left branch of the cubic $Y=g(X)$, $X$ increases slowly: system \eref{eqdotx}-\eref{eqdoty} displays the step-5 behavior
and $(x,y)$ turns around the limit cycle. Then, $X$ increases very quickly, the limit cycle of system \eref{eqdotx}-\eref{eqdoty} disappears (step 6).
$X$ decreases slowly and $(x,y)$ climbs up the left branch of the cubic $y=f(x)$ (step 7 to 9). Afterwards $X$ decreases very quickly and $(x,y)$
goes down near the left branch before oscillating again around the relaxation limit cycle. Typical
orbits of system \eref{Sys1} generate $y(t)$ signal patterns such as that represented in Figure \ref{PhasesAccX}-d.

\subsection{Pulse amplitude}  \label{PulseAmpl}

From the previous discussion, we have seen that a pulsatile $y(t)$ signal is generated from the oscillations of $(x,y)$ around the limit cycle
of the GnRH Secreting System. For all values of $X$ leading to such oscillations, the shape of the limit cycle of system \eref{eqdotx}-\eref{eqdoty}
remains almost the same (see Figure \ref{PhasesAccX}-c)). Consequently, the pulse amplitude is approximatively equal to the difference between
the $y$ coordinates of the two extrema $P^{f}_{+}$ and $P^{f}_{-}$ of $y=g(x)$. Thus, we can keep a unique relevant parameter to define
the cubic function $g$ and represent the amplitude. This reparameterization is described in more details in \cite{fc-jpf_07}.

To simplify the calculations and keep similar expressions for $f$ and $g$, we pose:
\begin{eqnarray*}
f(x) &=&-x^{3}+3\lambda ^{2}x \\
g(X) &=&-X^{3}+3\mu ^{2}X
\end{eqnarray*}
Then, the coordinates of the local extrema, $P_{\pm}^{f}$ and $P_{\pm}^{g}$, as well as those of their projections on the other branch of the cubics (along the trajectories of the fast systems), $Q_{\pm}^{f}$ and $Q_{\pm}^{g}$, are easy to compute:
\begin{subequations}
\begin{eqnarray}
P_{+}^{f} = (\lambda , 2\lambda ^{3}), \qquad P_{-}^{f} = (-\lambda , -2\lambda ^{3}) \\
P_{+}^{g} = (\mu , 2\mu ^{3}), \qquad P_{-}^{g} = (-\mu , -2\mu ^{3})
\end{eqnarray}
\label{Extr}
\end{subequations}
\begin{subequations}
\begin{eqnarray}
Q_{+}^{f} = (-2\lambda , 2\lambda ^{3}), \qquad Q_{-}^{f} = (2\lambda , -2\lambda ^{3}) \\
Q_{+}^{g} = (-2\mu , 2\mu ^{3}), \qquad Q_{-}^{g} = (2\mu , -2\mu ^{3})
\end{eqnarray}
\label{ProjExtr}
\end{subequations}
These reference points and their coordinates are displayed in Figure \ref{Cubics}.

\begin{figure}[htb]
\centering
\includegraphics[height=9cm]{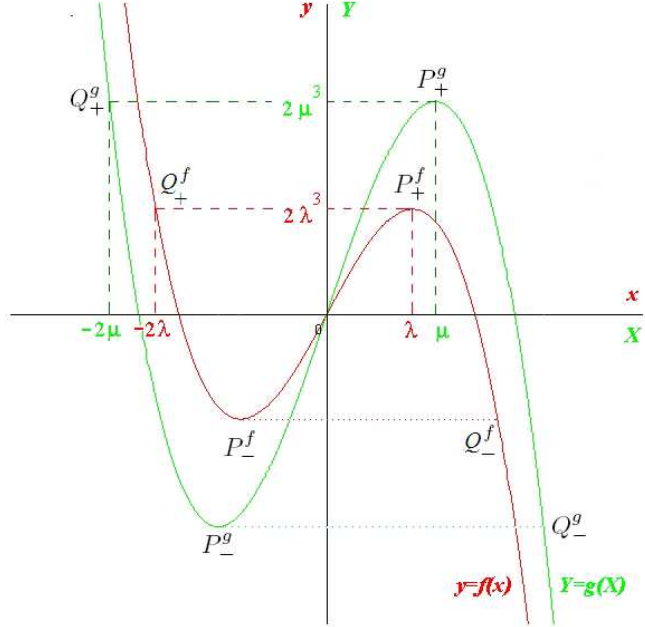}
\caption{Graph of the cubics defining the $\dot{x}$ and $\dot{X}$ nullclines. The coordinates of
the local extrema $P^{f}_{\pm }$ (resp. $P^{g}_{\pm }$) of the $x$-nullcline (resp. $X$-nullcline) are given in
\eref{Extr}. The projections $Q^{f}_{\pm }$ (resp. $Q^{g}_{\pm }$) of these extrema along $y=cst$ (resp. $Y=cst$)
are given in \eref{ProjExtr}.}
\label{Cubics}
\end{figure}

Henceforth, we set $\lambda$ and $\mu$ to given, constant values. We also fix $\delta$ to a value small enough for the GnRH Secreting System to admit a relaxation limit cycle in the step-5 case described in section \ref{QualBehav}. Each of the other parameters constitute a degree of freedom, whose value has to be fitted to any specific list of specifications, as it is illustrated in \S \ref{Appli}. We begin in the next section by studying the effect of $\varepsilon $, $b_{1}$ and $b_{2}$ on the duration ratio between the surge and the whole cycle.

\subsection{Control of the two-way transition between the pulsatility phase and the surge by the dynamics of the regulating system} \label{PhasesChar}

From the sequence of behaviors described in \S\ref{QualBehav}, we can establish a direct link between
the pattern of the GnRH secretion signal and the values of $X$ along two distinct parts of the
Regulating System limit cycle.

As stated in \cite{fc-jpf_07}, the time during which the current point $(X,Y)$ remains in a neighborhood
of the right branch of $Y=g(X)$ along the limit cycle is $O(1)$ in $\varepsilon $. In contrast, the transition
time of the Regulating System from a neighborhood of $P_{+}^{g}$ to a neighborhood of $Q_{+}^{g}$ is $O(\varepsilon )$.
The same $\varepsilon $-order applies respectively to the slow motion near the left branch of the cubic $Y=g(X)$
and the fast motion between $P_{-}^{g}$ and $Q_{-}^{g}$. 

In a precise zone of the parameter space defined in \S\ref{GnRHSecrSys}, we can ensure that,
for $-2\mu \leq X \leq -\mu$, the GnRH secretion signal has a pulsatile pattern (step 5), while, for $\mu \leq X \leq 2\mu$,
it switches to a surge pattern (step 7 to 9). Once $(X,Y)$ has reached the vicinity of $Q_{+}^{g}$,
it takes a $O(\varepsilon )$-time to $(x,y)$ to reach a neighborhood of $Q_{-}^{f}$ and oscillate again around
the limit cycle of the GnRH Secreting System. Consequently, if we neglect the $O(\varepsilon )$-duration of
fast motions, we can approximate the duration of the pulsatility phase by the time during which $X<0$
and the surge duration by the time during which $X>0$ (see panels c and d of Figure \ref{PhasesAccX}).

\section{Bifurcation Diagram of the Regulating System}  \label{RegSys}

In this section, we first describe the sequence of bifurcations undergone by the
Regulating System $\left( RS_{\varepsilon } \right)$, defined by \eref{dv} 
and the corresponding behaviors exhibited by the whole system (whether such behaviors
are meaningful or not from the biological viewpoint) in delimited zones of the parameter space.
Then, we restrict the parameter space to a domain where the whole system meets the prescribed dynamics,
alternating between pulsatility phases and surges.

We are interested in determining conditions under which the Regulating System admits an attractive limit cycle,
further denoted by $C(b_{1},b_{2}, \varepsilon )$. We have first to assume that system \eref{dv} admits a singular point
$\left( X_{0}, Y_{0}=g\left( X_{0}\right) \right) $ such that $\left\vert X_{0}\right\vert <\mu $, and that
$\varepsilon $  is small enough (in a sense that we will explain later in Lemma \ref{Secu}).
Yet, even under those necessary conditions, two different types of bifurcation may make the limit cycle disappear: a
supercritical Hopf bifurcation and a homoclinic bifurcation. To explain the occurrence of these
bifurcations, we have to remind that, for any $b_{1}>0$, system \eref{dv} admits two other singular points of saddle type
$\left( X_{-},Y_{-}=g(X_{-})\right) $, $X_{-}<-\mu $ and $\left(X_{+},Y_{+}=g(X_{+})\right) $, $X_{+}>\mu $.

\subsection{Hopf Bifurcation}

The Hopf bifurcation happens when the middle singular point $\left( X_{0},Y_{0}\right) $
coincides with the local minimum of the cubic $Y=g(X)$,
i.e. when $X_{0}=-\mu $. Then :
\[
-\mu +b_{1}g(-\mu )+b_{2}=0\Longleftrightarrow -\mu +b_{1}\left( \mu
^{3}-3\mu ^{3}\right) +b_{2}=0
\]
Hence, the Hopf bifurcation occurs along the surface:
\begin{equation}
\mathcal{H}_{p}:b_{2}=h_{p}(b_{1})=\mu +2b_{1}\mu ^{3}  \label{dv Hopf}
\end{equation}
in the parameter space $\left( \varepsilon , b_{1}, b_{2} \right) $.

The Hopf bifurcation gives birth to a small limit cycle which surrounds the middle
singular point $\left( X_{0},Y_{0}\right) $ as $(b_{1},b_{2})$ crosses transversally
$\mathcal{H}_{p}$. Since ($RS_{\varepsilon }$) is a slow-fast system, this limit cycle becomes quickly a big relaxation limit cycle when
$\left( \varepsilon, b_{1}, b_{2} \right)$ moves away from $\mathcal{H}_{p}$.

The occurrence of small limit cycles in such relaxation systems
is known as ``Canard phenomenon''. The ``Canard cycles'' are attractive limit cycles which follow,
against all expectations, a neighborhood of the fast dynamics manifold of repulsive points for a while.
Some examples are displayed in Figure \ref{Canards}.

\begin{figure}[htb]
\centering
\includegraphics[height=8cm]{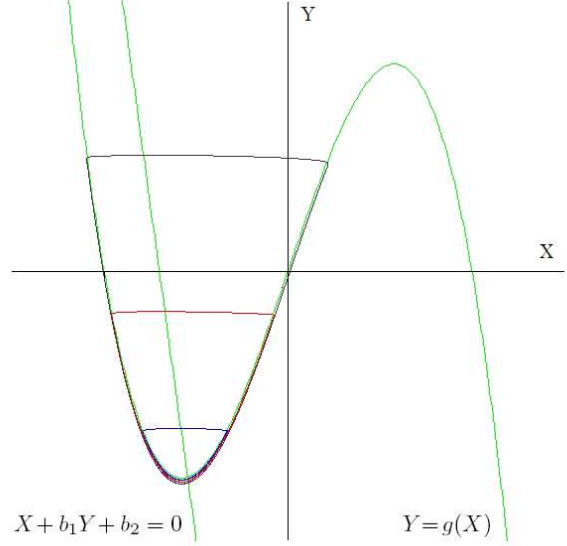}
\caption{Examples of Canard cycles of the Regulating System $(RS_{\varepsilon })$ for three different yet very close (to $10^{-12}$) values of $\varepsilon$.}
\label{Canards}
\end{figure}

\subsection{Homoclinic Bifurcation}

The homoclinic bifurcation occurs when the left saddle point $\left( X_{-},Y_{-}\right) $
\textquotedblleft crosses\textquotedblright\ the limit
cycle, leading to the intersection of its stable manifold with its unstable manifold. We
can intuitively understand this bifurcation by considering the common
limit periodic set $\Gamma _{0}$ approached by the family of periodic orbits
for the Hausdorff distance\footnote{
The Hausdorff distance between two compacts $K$ and $K^{\prime }$ of a metric
space $(E,d)$ is the smallest $r>0$ such that $\forall x\in K,d(x,K^{\prime })<r$
and $\forall y\in K^{\prime },d(y,K)<r$.}, when $\varepsilon \rightarrow 0$.
This set is the compact union of the cubic parts from $Q_{-}^{g}$ to $P_{+}^{g}$ and from
$Q_{+}^{g}$ to $P_{-}^{g}$ on one hand, and the fast trajectories $\left] P_{+}^{g} , Q_{+}^{g} \right[ $
and $\left] P_{-}^{g} , Q_{-}^{g} \right[ $ on the other hand;
it is represented in Figure \ref{lps}.

\begin{figure}[htb]
\centering
\includegraphics[height=5.5cm]{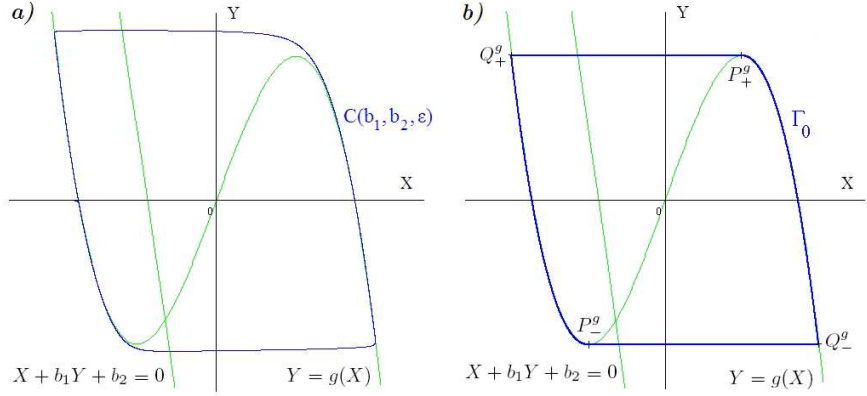}
\caption{a) Limit cycle of the Regulating System $(RS_{\varepsilon })$ for the following values of parameters: $b_{1}=0.1$, 
$b_{2}=1$, $\protect\varepsilon $ $=0.1$. b) Limit Periodic Set for 
$b_{1}=0.1 $, $b_{2}=1$: limit of the family $(C(b_{1},b_{2},\protect%
\varepsilon ))_{\protect\varepsilon \in ]0;\protect\varepsilon _{0}]}$ as $%
\protect\varepsilon $ tends to $0$ according to the Hausdorff distance.}
\label{lps}
\end{figure}

Hence the $\varepsilon$, zero-order approximation of the limit cycle allows
us to develop the zero-order approximation of the homoclinic connexion:
\[
\left( X_{-},g(X_{-})\right) =(X_{Q_{+}},g(X_{Q_{+}}))
\]
It is equivalent to state that $Q_{+}=\left( X_{Q_{+}},g(X_{Q_{+}})\right)
=(-2\mu ,2\mu ^{3})$ lies on the separatrix $X+b_{1}Y+b_{2}=0$:
\[
-2\mu +b_{1}g(-2\mu )+b_{2}=0
\]
so that the homoclinic bifurcation occurs along the surface:
\begin{equation}
\mathcal{H}_{c}^{0}:b_{2}=h_{c}^{0}(b_{1})=2\mu -2b_{1}\mu ^{3}
\label{Homoc0}
\end{equation}

To delimit the domain in the parameter space where the $\varepsilon$ zero-order approximation is valid,
we have to take into account the Canard phenomenon occurring near the Hopf bifurcation. In particular,
the occurrence of small cycles such as those represented in Figure
\ref{Canards} shows that, in the region of the parameter space close to the Hopf bifurcation surface
$\mathcal{H}_{p}$, $\mathcal{H}_{c}^{0}$ does not approximate the homoclinic bifurcation condition.
The occurrence of homoclinic bifurcations in such a case is an
awkward problem which we will not tackle in this article. Consequently, we make
the suitable assumptions to remain in a zone of parameter values for which this
question is not bothering. To do so, we need the following lemma:

\begin{lemma} \label{lemmaH0c}
For each $\alpha >0$, for all $(b_{1},b_{2})$ verifying:
\begin{equation}
b_{2}<\mu +2b_{1}\mu ^{3}-\alpha  \label{SecuHopf}
\end{equation}
and:
\begin{equation}
b_{2}<2\mu -2b_{1}\mu ^{3}  \label{SecuHomoc}
\end{equation}
there exists $\varepsilon _{0}>0$ such that, for all $\varepsilon \in
]0,\varepsilon _{0}[$, the limit cycle of \eref{dv} exists and contains
some points of $[ \mu ,+\infty [ \times \mathbb{R} $.
Moreover, for $(b_{1},b_{2})$ fixed, the limit cycle $C(b_{1},b_{2},
\varepsilon )$ lies in a $O(\varepsilon ^{2/3})$-neighborhood of $\Gamma_{0} $.
\label{Secu}
\end{lemma}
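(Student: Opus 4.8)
The plan is to treat $(RS_\varepsilon)$ as a planar slow--fast system with fast variable $X$, slow variable $Y$ and critical manifold the cubic $Y=g(X)$, and to show that $\Gamma_0$ is a nondegenerate singular relaxation cycle which perturbs, for small $\varepsilon$, to a nearby attractive limit cycle. Write $\psi(X)=X+b_1g(X)+b_2$, so that $\dot Y=\varepsilon\psi(X)$ and the singular points are the zeros of $\psi$ carried to $Y=g(X)$. First I would record the geometry forced by the hypotheses. Since $\psi(\mu)=\mu+2b_1\mu^3+b_2>0$ while \eref{SecuHopf} gives $\psi(-\mu)=b_2-h_p(b_1)<-\alpha<0$, the middle zero $X_0$ lies strictly inside $(-\mu,\mu)$ and is kept away from the fold $P_-^g$ by the margin $\alpha$; the Jacobian of \eref{dv} there has positive trace $g'(X_0)+\varepsilon b_1$ and positive determinant $\varepsilon(1+b_1g'(X_0))$ (because $g'(X_0)=3(\mu^2-X_0^2)>0$), so this point is repulsive. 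Likewise \eref{SecuHomoc} gives $\psi(-2\mu)=b_2-h_c^0(b_1)<0$, which places the left saddle $X_-$ strictly to the left of $Q_+^g=(-2\mu,2\mu^3)$ and rules out the homoclinic obstruction of \eref{Homoc0}. Evaluating $\psi$ along the two outer branches then shows $\psi<0$ on the left-branch arc from $Q_+^g$ to $P_-^g$ and $\psi>0$ on the right-branch arc from $Q_-^g$ to $P_+^g$, so the slow flow is everywhere directed towards the corresponding fold and $\Gamma_0$ is a consistently oriented candidate cycle.

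Next, away from the two folds I would invoke Fenichel's geometric singular perturbation theory \cite{fe_79} to obtain locally invariant attracting slow manifolds $O(\varepsilon)$-close to the left and right branches, carrying exponentially strong normal contraction. Combined with the sign computation above, this lets me build a thin closed annular trapping region $\mathcal{A}_\varepsilon$ around $\Gamma_0$: along the branch portions the exponential attraction together with the transverse fast field point the flow strictly inward, while the inner boundary is chosen to encircle only the repulsive middle singular point, so that $\mathcal{A}_\varepsilon$ contains no equilibrium.

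The hard part will be the two fold passages, where Fenichel theory degenerates. Near $P_\pm^g$ the fold is nondegenerate ($g''(\pm\mu)=\mp6\mu\neq0$) and the slow flow is transverse ($\psi(\pm\mu)\neq0$), so these are generic jump points. I would analyze them with the standard blow-up / matched-asymptotics treatment of a jump point (as in \cite{ThVidal}): the fold rescaling, of order $\varepsilon^{1/3}$ in the fast variable and $\varepsilon^{2/3}$ in the slow variable, reduces the passage to a Riccati (Airy) equation, which shows that the extended slow manifold turns at distance $O(\varepsilon^{2/3})$ from the fold and that the trajectory then jumps horizontally to land within $O(\varepsilon^{2/3})$ of $Q_\pm^g$. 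This $\varepsilon^{2/3}$ contribution dominates the $O(\varepsilon)$ error accumulated on the branches, so I would fix the width of $\mathcal{A}_\varepsilon$ to be $O(\varepsilon^{2/3})$ near the folds; the inward-pointing condition of the vector field on $\partial\mathcal{A}_\varepsilon$ then persists through the fold regions. This fold analysis, and the exponent $2/3$ it produces, is the genuine technical obstacle; everything else is bookkeeping.

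Finally, since $\mathcal{A}_\varepsilon$ is a positively invariant annulus free of equilibria, the Poincar\'e--Bendixson theorem yields a periodic orbit inside it; uniqueness and attractivity follow from the return map on a cross-section of the right branch being a contraction with exponentially small rate, a consequence of the normal attraction of the slow manifolds. This periodic orbit is the limit cycle $C(b_1,b_2,\varepsilon)$, and the inclusion $C\subset\mathcal{A}_\varepsilon$ gives exactly the claimed $O(\varepsilon^{2/3})$-closeness to $\Gamma_0$. Because $\Gamma_0$ contains the whole right-branch arc from $Q_-^g=(2\mu,-2\mu^3)$ to $P_+^g=(\mu,2\mu^3)$, on which $X$ ranges over $[\mu,2\mu]$, for $\varepsilon$ small enough the cycle necessarily meets $[\mu,+\infty[\times\mathbb{R}$, which completes the statement. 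Note that the margin $\alpha$ in \eref{SecuHopf} is precisely what keeps $X_0$ uniformly inside the middle branch and thereby keeps the whole construction away from the canard regime, where closeness to $\Gamma_0$ would break down.
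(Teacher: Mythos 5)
Your argument is correct, and its technical core coincides with the paper's: both proofs rest on Fenichel persistence of the two attracting branches of $Y=g(X)$, on the $O(\varepsilon^{2/3})$ analysis of the passage past the nondegenerate jump points $P^{g}_{\pm}$, and on the observation that \eref{SecuHopf} keeps the repulsive middle equilibrium a distance of order $\alpha$ away from the fold $P^{g}_{-}$ (excluding canards) while \eref{SecuHomoc} keeps the left saddle strictly above the landing point $Q^{g}_{+}$ (excluding the homoclinic obstruction). You differ only in the closing step. The paper tracks one full circuit of the flow to show that the first return map on a section $\Sigma$ transverse to the left slow manifold is well defined, and then appeals to the series expansion of that map (following the reference invoked in its proof) to obtain a unique stable fixed point; you instead build a positively invariant, equilibrium-free annulus of width $O(\varepsilon^{2/3})$ around $\Gamma_{0}$ and conclude by Poincar\'e--Bendixson, recovering attractivity and uniqueness from the exponential normal contraction of the return map. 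Your version is more self-contained, and your explicit check that the middle equilibrium is repulsive (trace $g'(X_{0})+\varepsilon b_{1}>0$, determinant $\varepsilon(1+b_{1}g'(X_{0}))>0$) makes visible something the paper leaves implicit. Two small points deserve care in your route: along the two jump segments $\dot{Y}=\varepsilon(X+b_{1}Y+b_{2})$ changes sign as $X$ sweeps past $X_{0}$, so the annulus boundary there cannot be a naive horizontal strip and is best taken from trajectory arcs joined by short transversals; and the requirement that the left saddle lie outside the outer boundary of the annulus is precisely what forces $\varepsilon_{0}$ to shrink to $0$ as $(b_{1},b_{2})$ approaches $\mathcal{H}_{c}^{0}$, in agreement with Remark \ref{RemR0}.
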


\begin{proof}
We use the sketch of the well-known proof of limit cycle existence for the
FitzHugh-Nagumo system, when the left and right singular points are respectively far away
from $P^{g}_{-}$ and $P^{g}_{+}$. This proof is based on the series expansion of the first
return map induced by the flow of  $(RS_{\varepsilon })$.
We focus here on the well-definition of this map. We base on the arguments enunciated in \cite{bo}
to derive the series expansion of the map and prove the existence of one stable fixed point and that
of the attractive limit cycle. The geometric objects mentioned below are represented in Figure \ref{NormHypMan}.

\begin{figure}[htb]
\centering
\includegraphics[height=9cm]{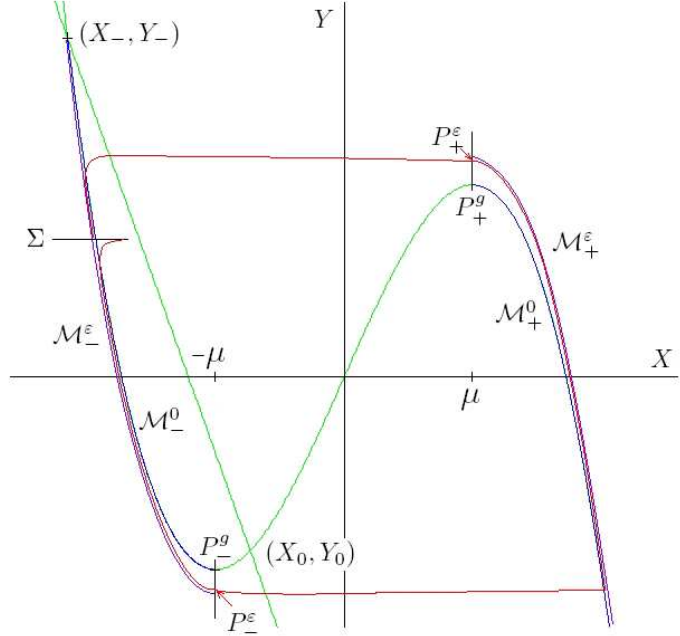}
\caption{The part $\mathcal{M}_{-}^{0}$ of the cubic $Y=g(X)$ between $\left( X_{-},Y_{-} \right) $
and $P^{g}_{-}$ is a normally invariant attractive manifold for the system $(RS_{0})$.
The part of the cubic $Y=g(X)$ between $P^{g}_{+}$ and $\left(X_{+},Y_{+}\right) $
(not shown on the picture, far down on the right branch) is a
normally invariant attractive manifold for the system $(RS_{0})$.
Thus, $\mathcal{M}_{-}^{0}$ (resp. $\mathcal{M}_{+}^{0}$) perturbs into a $O(\varepsilon )$-close
manifold $\mathcal{M}_{-}^{\varepsilon }$ (resp. $\mathcal{M}_{+}^{\varepsilon }$), which is invariant
and normally hyperbolic for  $(RS_{\varepsilon })$. \newline
Starting from an initial data on section $\Sigma $ transverse to $\mathcal{M}_{-}^{0}$,
we can track the orbit of $(RS_{\varepsilon })$ until $X=-\mu $, then near $\mathcal{M}_{+}^{\varepsilon }$
until $X=\mu $, and, at least, to $\Sigma $.}
\label{NormHypMan}
\end{figure}

Let $\alpha >0$ and $(b_{1},b_{2})$ verifying \eref{SecuHopf} and 
\eref{SecuHomoc}. The part of the cubic between  $\left(X_{-},Y_{-}\right) $ and $P^{g}_{-}$,
denoted by $\mathcal{M}_{-}^{0}$, is a normally attractive manifold of the
boundary-layer system associated to $(RS_{\varepsilon }$), which perturbs into a left
$O(\varepsilon )$-close normally attractive manifold of $(RS_{\varepsilon })$, denoted by
$\mathcal{M}_{-}^{\varepsilon }$. Let us consider a small section $\Sigma $ transverse
to $\mathcal{M}_{-}^{0}$. Then, all trajectories of $(RS_{\varepsilon })$
starting from $\Sigma $ must remain in an exponentially small neighborhood
of $\mathcal{M}_{-}^{\varepsilon }$ until they enter a $O(\varepsilon
^{2/3})$-neighborhood of $P^{g}_{-}$ (see \cite{ThVidal}). According to \eref{SecuHopf},
the middle singular point of $\left( RS_{\varepsilon } \right) $ is above $Q^{g}_{+}$.
Thus, we can choose $\varepsilon $ small enough so that this neighborhood does not
contain $\left(X_{0},Y_{0}\right) $.

Let us remind too that the limit cycle crosses $X=-\mu $ at a point $P_{-}^{\varepsilon }$
below the point $P^{g}_{-}$, since $\dot{y}<0$ along this part of the limit cycle.
All trajectories of $(RS_{\varepsilon })$ starting from a $O(\varepsilon
^{2/3})$-neighborhood of $P^{g}_{-}$ keep away from any singular point for 
$\varepsilon $ small enough. Hence, they remain in a $O(\varepsilon )$
-neighborhood of the fast trajectory starting from $P_{-}^{\varepsilon }$,
which reaches the right attractive manifold.

With the same arguments as above, we keep on tracking the orbits of 
$(RS_{\varepsilon })$ in an exponentially small neighborhood of the right $O(\varepsilon )$-close
normally attractive manifold of $(RS_{\varepsilon })$, denoted by $\mathcal{M}_{+}^{\varepsilon }$,
until they reach a point $P_{+}^{\varepsilon }$ lying on $X=\mu $ and above $P^{g}_{+}$. 
According to the theory of regular perturbations,
the orbits which have just followed the perturbed attractive manifold remain in a 
$O(\varepsilon ^{2/3})$-neighborhood of the fast trajectory connecting
$P^{g}_{+}$ with $Q^{g}_{+}$.
From \eref{SecuHomoc}, the left saddle point is
strictly above the point $Q_{+}$ on the cubic. This allows us to choose 
$\varepsilon $ small enough to make sure that all trajectories cross the
cubic and thereafter remain in an exponentially small neighborhood of 
$\mathcal{M}_{-}^{\varepsilon }$ until it crosses $\Sigma $. Thus the first return
map is well-defined on $\Sigma $ for $\varepsilon $ small enough,
i.e. $\varepsilon \in ]0,\varepsilon _{0}[$. The limit cycle 
$C(b_{1},b_{2},\varepsilon )$ lies in a $O(\varepsilon ^{2/3})$-neighborhood
of the limit periodic set $\Gamma _{0}$ (see Figure \ref{lps}) and contains
some points of $[\mu ,+\infty [ \times \mathbb{R} $.
\end{proof}

\begin{remark}  \label{RemR0}
The smaller $\alpha $ is, the smaller $\varepsilon _{0}$ has to be, to
avoid any Canard effect. Hence,  from now on, $\alpha $ is fixed to a constant value small enough so that
the region:
\[
\mathcal{R}_{0}=\left\{ \left( b_{1},b_{2},\varepsilon \right) \left\vert 
\begin{array}{l}
b_{2}<\mu +2b_{1}\mu ^{3}-\alpha \\ 
b_{2}<2\mu -2b_{1}\mu ^{3}
\end{array}
\right. \right\}
\]
is non empty. 

On the other hand, the closer $b_{2}$ is from $2\mu-2b_{1}\mu ^{3}$,
the closer the left singular point is from the limit
cycle, and the smaller $\varepsilon _{0}$ has to be. $\varepsilon _{0}$
tends to $0$ as $(b_{1},b_{2})$ approaches $\mathcal{H}_{c}^{0}$.
\end{remark}

In other words, for a fixed value of $\varepsilon $, a homoclinic connexion of the left
singular point occurs as $(b_{1},b_{2})$ approaches $\mathcal{H}_{c}^{0}$. We seek this
connexion as $b_{2}$ increases from $0$, assuming that:
\[
b_{1}>\frac{\mu +\alpha }{4\mu ^{3}}
\]
When $\varepsilon $ is fixed to a small value, there exists $(b_{1},b_{2})$,
verifying \eref{SecuHopf} and \eref{SecuHomoc}, for which the value of 
$\varepsilon _{0}$, defined in lemma \ref{lemmaH0c}, is greater than 
$\varepsilon $. Then, for $b_{2}$ small enough, the left singular point lies
outside the $O(\varepsilon ^{2/3})$-neighborhood of $\Gamma _{0}$ containing the limit cycle
$C(b_{1},b_{2},\varepsilon )$. For a value of 
$b_{2}$ close to (and smaller than) $2\mu -2b_{1}\mu ^{3}$, the saddle
undergoes a homoclinic connexion where the limit cycle disappear.

The following proposition enunciates the condition for a homoclinic bifurcation to occur
away from any Canard situation :

\begin{proposition}
There exists, locally near $\varepsilon =0$, a $C^{1}$-surface in $\left(
b_{1},b_{2},\varepsilon \right) $ of homoclinic connexions given by
the graph:
\[
\mathcal{H}_{c}:b_{2}=h_{c}\left( b_{1},\varepsilon \right) 
=2\mu -2\mu ^{3}b_{1}+O\left(
\varepsilon ^{2/3}\right)
\]
defined for:
\[
b_{1}\in \left] \frac{\mu +\alpha }{4\mu ^{3}},\frac{1}{\mu ^{2}}\right[
\]
\end{proposition}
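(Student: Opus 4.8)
The plan is to reformulate the homoclinic connexion as a transverse zero of a scalar separation function and to apply the implicit function theorem, using as a black box the manifold-tracking constructed in the proof of Lemma \ref{Secu}. Fix $b_1 \in \left]\frac{\mu+\alpha}{4\mu^3}, \frac{1}{\mu^2}\right[$: the lower bound is exactly what makes \eref{SecuHopf} compatible with $b_2 = 2\mu - 2b_1\mu^3$, so that the whole construction takes place inside $\mathcal{R}_0$ and away from any Canard regime, while the upper bound keeps the left point $(X_-,Y_-)$ a genuine saddle with $X_-<-\mu$. For $\varepsilon>0$ small, the eigenvalues of $(RS_\varepsilon)$ at $(X_-,Y_-)$ show that its unstable manifold $W^u$ is tangent to the slow manifold $\mathcal{M}_-^\varepsilon$ (the expanding, slow direction) while its stable manifold $W^s$ is tangent to the fast horizontal fibre. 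The descending branch of $W^u$ therefore runs down $\mathcal{M}_-^\varepsilon$ to the fold $P_-^g$, jumps to the right branch, climbs $\mathcal{M}_+^\varepsilon$ and detaches near $P_+^g$ back towards the left branch; a homoclinic orbit is precisely the event that this returning trajectory lands on $W^s$.

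To quantify this I would reuse the section $\Sigma$ transverse to $\mathcal{M}_-^0$ from Lemma \ref{Secu} (placed near $Q_+^g$) and define
\[
D(b_1,b_2,\varepsilon) = \pi\bigl(W^u_{\mathrm{ret}}\cap \Sigma\bigr) - \pi\bigl(W^s\cap\Sigma\bigr),
\]
the signed difference, along $\Sigma$, between the first return of $W^u$ and the trace of $W^s$, where $\pi$ is an arclength coordinate on $\Sigma$. By Fenichel theory the manifolds $\mathcal{M}_\pm^\varepsilon$ and the two fast connexions depend smoothly on $(b_1,b_2,\varepsilon)$ for $\varepsilon\in\,]0,\varepsilon_0[$, and the tracking of Lemma \ref{Secu} confirms that each leg of the excursion is followed within an exponentially small tube (resp. an $O(\varepsilon^{2/3})$ tube near $P_+^g$); hence $D$ is well defined and $C^1$ in all three variables for $\varepsilon>0$.

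The zeroth-order root is read off from $\Gamma_0$: in the singular limit the returning leg lands at $Q_+^g=(-2\mu,2\mu^3)$ and $W^s$ reduces to the horizontal fibre through the saddle, so $D(b_1,b_2,0)=0$ iff the saddle sits at $Q_+^g$, i.e. iff $-2\mu+b_1 g(-2\mu)+b_2=0$, which is exactly $\mathcal{H}_c^0$ of \eref{Homoc0}. For the transversality I would differentiate in $b_2$: increasing $b_2$ slides the saddle down the left branch at an $O(1)$ rate (from $b_2=-X_- - b_1 g(X_-)$, whose derivative $-1-b_1 g'(X_-)$ does not vanish for $b_1$ in the stated range), whereas the landing point of $W^u$ is, to leading order, pinned at $Q_+^g$ independently of $b_2$; hence $\partial D/\partial b_2\neq 0$ on a neighbourhood of the root. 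The implicit function theorem then yields a unique $C^1$ graph $b_2=h_c(b_1,\varepsilon)$ with $h_c(b_1,0)=h_c^0(b_1)=2\mu-2b_1\mu^3$.

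It remains to pin the remainder to $O(\varepsilon^{2/3})$, and this is the step I expect to be the genuine obstacle. Normal hyperbolicity is lost at the fold $P_+^g$, so the smooth dependence invoked above degenerates precisely there; controlling where $W^u$ detaches and lands requires the local fold (entry–exit) analysis, whose natural scale is $\varepsilon^{2/3}$ — the very scale already quoted in Lemma \ref{Secu} for the width of the tube containing $C(b_1,b_2,\varepsilon)$. Propagating the $O(\varepsilon^{2/3})$ correction of the jump-off point at $P_+^g$ through the otherwise $O(\varepsilon)$-regular portions of the excursion, and combining it with $\partial D/\partial b_2 = O(1)$, produces $h_c(b_1,\varepsilon)=h_c^0(b_1)+O(\varepsilon^{2/3})$. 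The admissible range of $b_1$ is then inherited from $\mathcal{R}_0$ via \eref{SecuHopf} and from the saddle condition, giving the interval $\left]\frac{\mu+\alpha}{4\mu^3}, \frac{1}{\mu^2}\right[$.
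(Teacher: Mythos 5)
Your argument is correct and lands on the same implicit-function-theorem computation, but it detects the homoclinic connexion by a genuinely different device. The paper works directly with the limit cycle: it characterizes the bifurcation by the condition that the leftmost point $\left(X_{\min},g(X_{\min})\right)$ of $C(b_{1},b_{2},\varepsilon)$ lies on the separatrix $X+b_{1}Y+b_{2}=0$, i.e.\ coincides with the saddle, then substitutes the estimate $X_{\min}=-2\mu+O(\varepsilon^{2/3})$ from Lemma \ref{Secu} into the scalar equation $u(b_{1},b_{2},\varepsilon)=0$ and solves it in $b_{2}$ using $\partial u/\partial b_{2}=1$. You instead build a Melnikov-type separation function $D$ between the first return of $W^{u}$ and the trace of $W^{s}$ on the section $\Sigma$, which is the standard rigorous framework for homoclinic bifurcations; your identification of the tangencies (unstable manifold along the slow branch, stable manifold along the fast fibre, a consequence of $1+b_{1}g'(X_{-})<0$ in the stated $b_{1}$-range) is correct and is left implicit in the paper, and your transversality argument ($W^{s}$ slides at an $O(1)$ rate in $b_{2}$ while the return of $W^{u}$ stays pinned near $Q_{+}^{g}$) is exactly the geometric content of the paper's $\partial u/\partial b_{2}=1$. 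The paper's route is shorter because it recycles the $O(\varepsilon^{2/3})$ control on $X_{\min}$ wholesale; yours makes the saddle geometry explicit and correctly isolates the only real analytic difficulty, namely the loss of normal hyperbolicity at the fold $P_{+}^{g}$ which is the source of the $\varepsilon^{2/3}$ scale. Be aware that both arguments share the same soft spot: the function handed to the implicit function theorem carries an $O(\varepsilon^{2/3})$ correction and is therefore not $C^{1}$ in $\varepsilon$ up to $\varepsilon=0$ (its $\varepsilon$-derivative behaves like $\varepsilon^{-1/3}$), so strictly one should solve in $b_{2}$ for each fixed $(b_{1},\varepsilon)$ with a transversality bound uniform in $\varepsilon$, which is what both proofs do in substance.
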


\begin{proof}
We have seen that the homoclinic connexion occurs when the
left singular point, $\left( X_{-},Y_{-}\right) $, with $X_{-}<-\mu 
$, crosses the limit cycle. Let $ X_{\min }\left(b_{1},b_{2},\varepsilon \right)$ be the
minimum of $X$ along the limit cycle $C(b_{1},b_{2}, \varepsilon )$.
From the fast dynamics (which is null on the cubic), we have immediately that
$\left(X_{\min }(b_{1},b_{2},\varepsilon ), g(X_{\min }(b_{1},b_{2},\varepsilon )\right)$ lies on
the cubic. Consequently, the condition for the homoclinic connexion reads:
\[
X_{\min }(b_{1},b_{2},\varepsilon )+b_{1}g(X_{\min }(b_{1},b_{2},\varepsilon
))+b_{2}=0
\]
As we know that $X_{\min }(b_{1},b_{2},\varepsilon )=X_{Q_{+}}+O\left(
\varepsilon ^{2/3}\right) =-2\mu +O\left( \varepsilon ^{2/3}\right) $, it also
reads:
\begin{equation}
u(b_{1},b_{2},\varepsilon )=-2\mu +O(\varepsilon ^{2/3})+b_{1}g(-2\mu
+O(\varepsilon ^{2/3}))+b_{2}=0 \\
\label{Homoceps1}
\end{equation}
where $u$ is a $C^{1}$ function on its definition domain (where the limit
cycle exists). For each $b_{1}$ value such that: 
\begin{equation}
b_{1}\in \left] \frac{\mu +\alpha }{4\mu ^{3}},\frac{1}{\mu ^{2}}\right[
\label{Secub1}
\end{equation}
$X_{\min }(b_{1},2\mu -2b_{1}\mu ^{3},0)$ is well defined (it is equal to
$X_{Q_{+}}$) and:
\[
\frac{\partial u}{\partial b_{2}}(b_{1},2\mu -2b_{1}\mu ^{3},0)=1
\]
Then, the implicit function theorem implies the existence of a unique root
of $u$ with respect to $b_{2}$ for any $b_{1}$ verifying \eref{Secub1} and 
$\varepsilon $ small enough. From \eref{Homoceps1}, this root can be expanded as:
\[
b_{2}=2\mu -2\mu ^{3}b_{1}+O\left(
\varepsilon ^{2/3}\right),\qquad \varepsilon \rightarrow 0
\]
\end{proof}

\subsection{Parameter space reduction}

From the bifurcation diagram, we can reduce the parameter space in order to
impose a periodic behaviour to the Regulating System. We can also ensure that,
along its attractive periodic orbit, the variable $X$ alternatively takes negative and positive values.
Since the sign of $X$ along the orbit of \eref{dv} discriminates between 
the two phases (pulsatility phase and surge) of the whole system \eref{Sys1}, the restriction
of the value domain for $\left(\varepsilon, b_{1}, b_{2}\right)$ is the first step to obtain an alternation
of pulsatility phases and surges.

In the sequel, we will only consider $(\varepsilon ,b_{1},b_{2})$ to be under both the surface of homoclinic
connexion and a security plane on the underside of  the Hopf bifurcation:
\[
H_{p}^{-\alpha }:b_{2}=h_{p}(b_{1})-\alpha 
\]
Hence we assume that:
\[
(b_{1},b_{2},\varepsilon )\in \mathcal{R}_{1}=\left\{ \left(
b_{1},b_{2},\varepsilon \right) \left\vert 
\begin{array}{l}
b_{2}<h_{p}(b_{1})-\alpha \\ 
b_{2}<h_{c}(b_{1},\varepsilon ) \\ 
\varepsilon <\varepsilon _{0}
\end{array}
\right. \right\}
\]
where $\varepsilon _{0}$ is defined in Lemma \ref{Secu}.

\begin{figure}[htb]
\centering
\includegraphics[height=7cm]{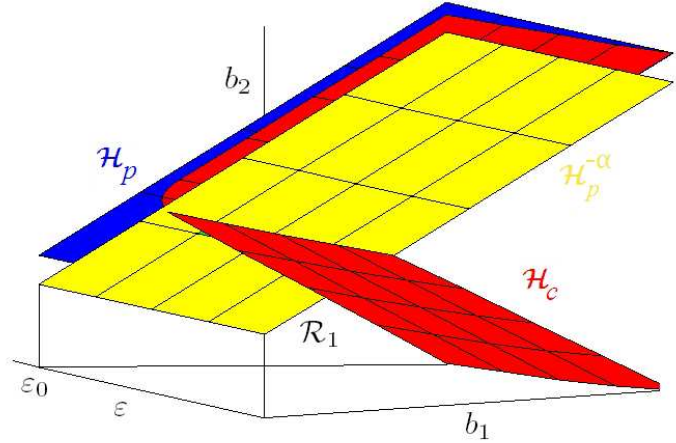}
\caption{Bifurcation diagram for $\varepsilon <\varepsilon _{0}$. We restrict the
parameter space to $\mathcal{R}_{1}$ : under the surface of homoclinic bifurcations $\mathcal{H}_{c}$
and far from the surface of Hopf bifurcation $\mathcal{H}_{p}$. There, we are ensured that the
Regulating System
exhibits an attractive limit cycle and that, along this orbit, $X$ takes alternatively positive
and negative values. Both conditions are necessary to obtain the alternance of surges and pulsatility phases
for the $y(t)$ signal generated by the whole system \eref{Sys1}.}
\label{Bif}
\end{figure}

In the next section, we will prove the possibility to tune the parameters $b_{1}$, $b_{2}$ and
$\varepsilon $ in this domain to fulfill the specification prescribing a given ratio of the duration of the pulsatility phase
to the surge duration.

\section{Foliation of the Regulating System parameter space} \label{FolRS}

We focus in this section on the way of determining all the values of the triplet
$(b_{1},b_{2},\varepsilon )$ leading to a given ratio between the duration of the pulsatility
phase and the surge duration. As we have seen in \S \ref{PhasesChar},
these durations are related to the time during which the current point $(X,Y)$ lies either in
the half-plane $X>0$  (pulsatility phase) or $X<0$ (surge) within one period along the limit
cycle of $(RS_{\varepsilon})$. Their ratio remains unchanged after time rescaling.

\begin{lemma}
For $(b_{1},b_{2},\varepsilon )\in \mathcal{R}_{1}$, let us consider a
parameterization $(X(t),Y(t))$ of the limit cycle $C(b_{1},b_{2},\varepsilon
)$ such that $X(0)=0$ and $Y(0)>0$, and let $T(b_{1},b_{2},\varepsilon )$
be its period. Then there exists a unique $T_{-}(b_{1},b_{2},\varepsilon )$
such that:
\begin{eqnarray*}
\forall t &\in &[0,T_{-}(b_{1},b_{2},\varepsilon )],\qquad X(t)\leq 0 \\
\forall t &\in &[T_{-}(b_{1},b_{2},\varepsilon ),T(b_{1},b_{2},\varepsilon)],\qquad X(t)\geq 0
\end{eqnarray*}
Let us denote $T_{+}(b_{1},b_{2},\varepsilon )=T(b_{1},b_{2},\varepsilon
)-T_{-}(b_{1},b_{2},\varepsilon )$. Then, as $\varepsilon $ tends to 0:
\begin{eqnarray*}
&&T_{-}(b_{1},b_{2},\varepsilon )=
\int_{X_{\min }(b_{1},b_{2},\varepsilon )}^{-\mu }\frac{g^{\prime}(X)+\frac{\partial \Psi _{-}}{\partial X}(X,\varepsilon )}
{X+b_{1}\left(g(X)+\Psi _{-}(X,\varepsilon )\right) +b_{2}}dX+O(\varepsilon ) \\
&&T_{+}(b_{1},b_{2},\varepsilon )=
\int_{X_{\max }(b_{1},b_{2},\varepsilon )}^{\mu }\frac{g^{\prime}(X)+\frac{\partial \Psi _{+}}{\partial X}(X,\varepsilon )}
{X+b_{1}\left(g(X)+\Psi _{+}(X,\varepsilon )\right) +b_{2}}dX+O(\varepsilon )
\end{eqnarray*}
where $\Psi _{-}<0$ and $\Psi _{+}>0$ are differentiable functions defined
respectively on:
\[
]X_{\min }(b_{1},b_{2},\varepsilon ),-\mu ]\times]0,\varepsilon _{0}]
\]
and:
\[
[X_{\min }(b_{1},b_{2},\varepsilon ),-\mu ]\times]0,\varepsilon _{0}]
\]
such that:
\begin{equation}
\exists \lambda (\varepsilon )\underset{\varepsilon \rightarrow 0}{=}
O(\varepsilon ^{2/3}),\forall X\in \lbrack X_{\min }(b_{1},b_{2},\varepsilon
),-\mu ],\left\vert \Psi _{\pm }(X,\varepsilon )\right\vert <\lambda
(\varepsilon )
\label{MajPsi}
\end{equation}
\end{lemma}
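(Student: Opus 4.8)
The plan is to prove the two assertions in turn: first the existence and uniqueness of the splitting time $T_-$, which is a geometric statement about how the cycle meets the axis $\{X=0\}$, and then the asymptotic formulas, which come from writing the transit time along the slow arcs \emph{exactly} and absorbing the fast connectors into the $O(\varepsilon)$ remainder. For uniqueness, I would show that $C(b_1,b_2,\varepsilon)$ meets $\{X=0\}$ transversally at exactly two points of opposite orientation. On $\{X=0\}$ one has $\dot X=-Y+g(0)=-Y$, so a crossing runs from $X>0$ to $X<0$ where $Y>0$ and from $X<0$ to $X>0$ where $Y<0$. By Lemma \ref{Secu} the cycle lies in an $O(\varepsilon^{2/3})$-neighborhood of $\Gamma_0$, whose intersections with $\{X=0\}$ lie only on the two fast segments, at heights $Y\simeq 2\mu^3$ and $Y\simeq -2\mu^3$; the two slow arcs satisfy $|X|\ge\mu-O(\varepsilon^{2/3})>0$ and cross nowhere. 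On each fast segment the factorization $g(X)\mp2\mu^3=-(X\mp\mu)^2(X\pm2\mu)$ shows $\dot X$ keeps a fixed sign as $X$ passes through $0$, so $X$ is strictly monotone there and the cycle meets $\{X=0\}$ exactly once per segment. Since $X(0)=0$ and $\dot X(0)=-Y(0)<0$, the point leaves into $\{X<0\}$, returns to $\{X=0\}$ on the lower segment (where $Y<0$, hence $\dot X>0$), and then stays in $\{X\ge 0\}$ until $t=T$; this yields a unique $T_-$ with the stated sign pattern.

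Next I would construct the deviation functions. On the descending left arc the cycle follows the perturbed attractive manifold $\mathcal M_-^{\varepsilon}$ of Lemma \ref{Secu}, which is a graph over $X$, so I set $\Psi_-(X,\varepsilon)=Y_{\mathrm{cycle}}(X)-g(X)$ on $[X_{\min},-\mu]$. Differentiability follows from smooth dependence of the flow on initial data, and the inequality $\Psi_-<0$ is exactly equivalent to $\dot X=-\Psi_->0$, i.e. to the current point climbing the branch from $X_{\min}$ toward the fold. The uniform bound $|\Psi_-|<\lambda(\varepsilon)=O(\varepsilon^{2/3})$ comes from Fenichel theory, which gives $O(\varepsilon)$ away from $P^g_-$, together with the degradation of this scale to $\varepsilon^{2/3}$ in the $O(\varepsilon^{2/3})$-window around the fold where normal hyperbolicity is lost; I would cite the fold estimates of \cite{ThVidal,fe_79}. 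The right-branch function $\Psi_+>0$ (the sign again following from $\dot X=-\Psi_+<0$ as the point rises along the right branch) is built identically, the stated domain for $\Psi_+$ being understood as the corresponding right-branch $X$-interval.

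I would then obtain the formula by computing $T_-$ exactly on the slow arc and estimating the remainder. In slow time $\dot Y=X+b_1Y+b_2$, whence $dt=dY/(X+b_1Y+b_2)$; substituting $Y=g(X)+\Psi_-$ and $dY=(g'(X)+\partial_X\Psi_-)\,dX$ produces precisely the stated integrand, and integrating from $X_{\min}$ to $-\mu$ gives the exact transit time along the slow arc. The denominator $X+b_1(g(X)+\Psi_-)+b_2$ has no singular point on $[X_{\min},-\mu]$: conditions \eref{SecuHopf}--\eref{SecuHomoc} keep both the middle point $X_0$ and the left saddle $X_-$ off this interval, and at the fold the denominator equals $b_2-\mu-2b_1\mu^3+O(\varepsilon^{2/3})<-\alpha+O(\varepsilon^{2/3})<0$. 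The only singularity is the integrable inverse-square-root turning-point singularity at $X_{\min}$, where $\Psi_-(X_{\min})=0$ and the cycle touches the cubic tangentially; in the $dY$ form $dt=dY/(X+b_1Y+b_2)$ this is manifestly harmless. Finally, the two remaining pieces of the arc $[0,T_-]$ — the fast connectors $X=0\to X_{\min}$ and $X=-\mu\to 0$ — are crossed with $\dot X=O(1)$ in fast time, hence $dt/dX=O(\varepsilon)$ over an $O(1)$ range of $X$, producing the $+O(\varepsilon)$ remainder. The symmetric argument on the right branch gives the formula for $T_+$.

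The main obstacle is the sharp, uniform $O(\varepsilon^{2/3})$ control of $\Psi_\pm$ all the way down to the folds: away from $P^g_\pm$ Fenichel theory yields only the generic $O(\varepsilon)$ estimate, but this degrades exactly where normal hyperbolicity fails, and capturing the correct $\varepsilon^{2/3}$ scale requires the blow-up and matched-asymptotics analysis of the fold passage rather than plain invariant-manifold theory. A secondary, easier point that nonetheless must be stated is the uniformity in $(b_1,b_2)\in\mathcal R_1$ of both the convergence of the slow integral near $X_{\min}$ and the $O(\varepsilon)$ cost of the fast connectors, which is what makes the remainder genuinely $O(\varepsilon)$ rather than merely $o(1)$ as $\varepsilon\to0$.
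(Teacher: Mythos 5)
Your proof is correct and follows essentially the same route as the paper: you define $\Psi_\pm$ as the deviation of the slow arc of the cycle, viewed as a graph over $X$, from the cubic, obtain the $O(\varepsilon^{2/3})$ bound from the closeness to the limit periodic set established in Lemma \ref{Secu}, and recover the integral formula from $dt=dY/\dot{Y}$ with the fast connectors absorbed into the $O(\varepsilon)$ remainder. You in fact make explicit several steps the paper leaves implicit (uniqueness of $T_-$ via the crossings of $\{X=0\}$ on the fast segments, non-vanishing of the denominator, integrability at $X_{\min}$, and the misprinted domain of $\Psi_+$), so no gap to report.
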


\begin{proof}
We consider only the left part of the limit cycle, along which $X<0$, and prove
the existence of $\Psi _{-}$. The argument for the right part is identical.

Let us assume $(b_{1},b_{2},\varepsilon )\in \mathcal{R}_{1}$ and consider
the set:
\[
U=\left\{ (X,Y)|X<-\mu ,Y<g(X_{\min }(b_{1},b_{2},\varepsilon
),Y<g(X)\right\}
\]
Since $\dot{X}=0$ along the left branch of the critical manifold and $\dot{Y}<0$
in $\bar{U}$, any orbit of \eref{dv} starting in $U$ escapes from $U$
across the half-line $\left\{ (X,Y)|X=-\mu ,Y<0\right\} $. 
From the dynamics \eref{dv}, we have also, along any trajectory remaining in $U$:
\begin{equation}\label{YdiffX}
\frac{dY}{dX}=\frac{\dot{Y}}{\dot{X}}=\varepsilon \frac{X+b_{1}Y+b_{2}}{-Y+g(X)}<0
\end{equation}
It follows from \ref{YdiffX} that such a trajectory can be represented as the graph of a
differentiable function of $X$. In particular, the limit cycle $
C(b_{1},b_{2},\varepsilon )$ goes through the point:
\[
\left( X_{\min}(b_{1},b_{2},\varepsilon ),g(X_{\min }(b_{1},b_{2},\varepsilon ))\right),
\]
enters $U$ and remains in $U$ until it crosses the half-line $\left\{
(X,Y)|X=-\mu ,Y<0\right\} $. Let us represent this trajectory as:
\[
\tilde{C}_{-}(b_{1},b_{2},\varepsilon ):Y=\gamma _{b_{1},b_{2},\varepsilon
}(X),\qquad X\in ]X_{\min }(b_{1},b_{2},\varepsilon ),-\mu ]
\]
and pose:
\[
\Psi _{-}:(X,\varepsilon )\rightarrow \gamma _{b_{1},b_{2},\varepsilon
}(X)-g(X)
\]

We can directly deduce that $\Psi _{-}<0$ and is differentiable with respect to $X$.
Since $(b_{1},b_{2},\varepsilon )$ lies in $\mathcal{R}_{1}$, the one-parameter
family $(C(b_{1},b_{2},\varepsilon ))_{\varepsilon \in ]0,\varepsilon _{0}]}$
converges to the limit periodic set $\Gamma _{0}$ described in Lemma \ref{Secu} and
each $\tilde{C}_{-}(b_{1},b_{2},\varepsilon )$ trajectory, $\varepsilon \in
]0,\varepsilon _{0}]$, remains in a $O\left( \varepsilon ^{2/3}\right)$-neighborhood of
$\{(X,g(X))|X\in ]X_{\min }(b_{1},b_{2},\varepsilon ),-\mu
]\}$. Finally, $\Psi _{-}$ is differentiable on:
\[
]X_{\min}(b_{1},b_{2},\varepsilon ),-\mu ]\times ]0,\varepsilon _{0}]
\]
and verifies
\eref{MajPsi}.
\end{proof}

\noindent It is worth noticing that, as $\varepsilon $ tends to $0$, the durations 
$T_{-}(b_{1},b_{2},\varepsilon )$ and $T_{+}(b_{1},b_{2},\varepsilon )$ tend
to the limit durations:
\begin{eqnarray}
&&T_{-}^{0}(b_{1},b_{2})=
\int_{-2\mu }^{-\mu }\frac{g^{\prime }(X)}{X+b_{1}g(X)+b_{2}}dX
\label{T0moins} \\
&&T_{+}^{0}(b_{1},b_{2})=
\int_{2\mu }^{\mu }\frac{g^{\prime }(X)}{X+b_{1}g(X)+b_{2}}dX
\label{T0plus}
\end{eqnarray}
which were chosen as approximations to compute the period of the limit cycle in \cite{fc-jpf_07}. 
When $(b_{1},b_{2},\varepsilon )$ is far under the surface of homoclinic
bifurcations $\mathcal{H}_{c}$, it is indeed a good approximation
(with a controlled $O(\varepsilon ^{2/3})$-error). But, in our case, since the duration of the
pulsatility phase has to be much longer than the surge duration, we have to increase
$T_{-}(b_{1},b_{2},\varepsilon )$ in comparison with $T_{+}(b_{1},b_{2},\varepsilon )$.
To do so, the current point has to be confined for a while in the vicinity of the left
singular point, where $(b_{1},b_{2},\varepsilon )$ may be very close to $\mathcal{H}_{c}$.
This is why we cannot neglect the time spent along the path from $X=X_{\min }$ to $X=-2\mu$,
even more so since the motion is very slow near the singular point. 

We now explain how to select the value of $b_{2}$ from fixed values of
$b_{1}$ and $\varepsilon $  to meet any prescribed ratio $T_{-}/T_{+}=r$.
We first consider the case $\varepsilon =0$.

\begin{proposition}
There exists a $C^{1}$-foliation of $\mathcal{R}_{1}\cap \{\varepsilon =0\}$
of one dimensionnal leaves such that:\newline
1) each leaf is the graph:
\[
b_{2}=l_{r}^{0}(b_{1}),\qquad b_{1}\in \left[ \bar{b}_{1}^{r},\frac{1}{\mu
^{2}}\right[ 
\]
of a differentiable function $l_{r}^{0}$,\newline
2) for each $r\geq 1$, there is a leaf on which:
\begin{equation}
\frac{T_{-}^{0}(b_{1},b_{2})}{T_{+}^{0}(b_{1},b_{2})}=r  \label{Ratio0}
\end{equation}
\end{proposition}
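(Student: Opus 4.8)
The plan is to realise the leaves as the level sets of the single ratio function
\[
R^{0}(b_{1},b_{2})=\frac{T_{-}^{0}(b_{1},b_{2})}{T_{+}^{0}(b_{1},b_{2})},
\]
and to show that $R^{0}$ is a $C^{1}$ submersion each of whose level sets $\{R^{0}=r\}$ is a graph over $b_{1}$. Throughout I write $D(X)=X+b_{1}g(X)+b_{2}$ for the common denominator in \eref{T0moins}--\eref{T0plus}, and use $g'(X)=3(\mu^{2}-X^{2})$, so that $g'\le 0$ on both $[-2\mu,-\mu]$ and $[\mu,2\mu]$ (vanishing only at $X=\pm\mu$). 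First I would check that $T_{\pm}^{0}$ are $C^{1}$ on the interior of $\mathcal{R}_{1}\cap\{\varepsilon=0\}$. On $[-2\mu,-\mu]$ one has $D(-\mu)<-\alpha<0$ (strictly below the Hopf security plane) and $D(-2\mu)<0$ (strictly below $\mathcal{H}_{c}^{0}$), and $D$ keeps constant sign along the left slow branch because the singular flow $\dot Y=D(X)$ does not change sign there; hence $D$ is bounded away from $0$ on this compact interval and one may differentiate under the integral sign. The same holds on $[\mu,2\mu]$, where $D>0$. This supplies the regularity needed to speak of a $C^{1}$ foliation.

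The decisive monotonicity comes from differentiating in $b_{2}$. Since $\partial D/\partial b_{2}=1$,
\[
\frac{\partial T_{-}^{0}}{\partial b_{2}}=-\int_{-2\mu}^{-\mu}\frac{g'(X)}{D(X)^{2}}\,dX>0,\qquad
\frac{\partial T_{+}^{0}}{\partial b_{2}}=-\int_{2\mu}^{\mu}\frac{g'(X)}{D(X)^{2}}\,dX<0,
\]
the signs following at once from $g'\le 0$ and the orientation of the two integrals. Consequently $\partial R^{0}/\partial b_{2}>0$: for fixed $b_{1}$, the ratio is strictly increasing in $b_{2}$. By the implicit function theorem each equation $R^{0}(b_{1},b_{2})=r$ then determines a unique $C^{1}$ function $b_{2}=l_{r}^{0}(b_{1})$, and since $R^{0}$ is a $C^{1}$ submersion its level sets form a $C^{1}$ foliation of the region. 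This already yields assertion 1) and the foliation statement, once the domain of each leaf is identified.

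It remains to show that the range of $R^{0}$ covers $[1,+\infty)$ and to pin down $\bar b_{1}^{r}$; this is where the real work lies. The symmetry $(X,Y,b_{2})\mapsto(-X,-Y,-b_{2})$ of the singular slow dynamics exchanges the two branches, giving $T_{-}^{0}(b_{1},b_{2})=T_{+}^{0}(b_{1},-b_{2})$ and hence $R^{0}(b_{1},0)=1$; combined with the monotonicity, $b_{2}\ge 0$ is exactly the half-region where $r\ge 1$ (and the leaves with $r\in(0,1)$ are their mirror images, so the full level-set family foliates all of $\mathcal{R}_{1}\cap\{\varepsilon=0\}$). At the other end, as $b_{2}\uparrow h_{c}^{0}(b_{1})$ one has $D(-2\mu)\to 0$ while $g'(-2\mu)=-9\mu^{2}\ne 0$, so the integrand of $T_{-}^{0}$ develops a simple pole at the endpoint $X=-2\mu$ and $T_{-}^{0}\to+\infty$ (logarithmically), whereas $T_{+}^{0}$ stays finite and positive; hence $R^{0}\to+\infty$. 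For $b_{1}\ge(\mu+\alpha)/(4\mu^{3})$ the homoclinic surface is the binding upper bound --- this is precisely the condition $h_{c}^{0}\le h_{p}-\alpha$ --- so $R^{0}(b_{1},\cdot)$ sweeps all of $[1,+\infty)$ and every $r\ge 1$ is attained; the upper end $b_{1}\to 1/\mu^{2}$ corresponds to $h_{c}^{0}(b_{1})\to 0$, which accounts for the open right bracket.

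For $b_{1}<(\mu+\alpha)/(4\mu^{3})$ the Hopf security plane $H_{p}^{-\alpha}$ cuts the $b_{2}$-interval off at $h_{p}(b_{1})-\alpha<h_{c}^{0}(b_{1})$, capping the attainable ratio at the finite value $R_{\max}(b_{1})=R^{0}\!\left(b_{1},h_{p}(b_{1})-\alpha\right)$; since the gap $h_{c}^{0}(b_{1})-\bigl(h_{p}(b_{1})-\alpha\bigr)=\mu+\alpha-4b_{1}\mu^{3}$ widens as $b_{1}$ decreases, $R_{\max}$ is monotone in $b_{1}$ and tends to $+\infty$ as $b_{1}\uparrow(\mu+\alpha)/(4\mu^{3})$. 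Thus the set of $b_{1}$ for which a given $r$ is attainable is a half-line $[\bar b_{1}^{r},1/\mu^{2})$, where $\bar b_{1}^{r}$ is the point at which the level curve $\{R^{0}=r\}$ meets the boundary of the admissible region (the solution of $R_{\max}(b_{1})=r$). I expect the main obstacle to be exactly this range analysis: controlling the logarithmic blow-up of $T_{-}^{0}$ at the homoclinic boundary and establishing the monotone dependence of the attainable-ratio interval on $b_{1}$ so that $\bar b_{1}^{r}$ is well defined. By contrast, the $C^{1}$ regularity and the $b_{2}$-monotonicity that underpin the foliation structure are routine.
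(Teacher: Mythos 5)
Your proposal follows essentially the same route as the paper: strict monotonicity of $T_{-}^{0}/T_{+}^{0}$ in $b_{2}$, the value $1$ at $b_{2}=0$ by the odd symmetry, divergence of $T_{-}^{0}$ as $b_{2}$ approaches $\mathcal{H}_{c}^{0}$ for $b_{1}\geq(\mu+\alpha)/(4\mu^{3})$, and the definition of $\bar{b}_{1}^{r}$ via the ratio restricted to the security plane $H_{p}^{-\alpha}$ for smaller $b_{1}$. You in fact supply explicit justifications (the sign of $\partial T_{\pm}^{0}/\partial b_{2}$ from $g'\leq 0$, the substitution $X\mapsto -X$, the endpoint pole at $X=-2\mu$) that the paper only asserts, and you correctly flag the one point both arguments leave informal, namely the monotonicity of the capped ratio along $H_{p}^{-\alpha}$ that makes $\bar{b}_{1}^{r}$ unique.
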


\begin{proof}
From \eref{T0moins} and \eref{T0plus}, we can see that $T_{-}^{0}(b_{1},b_{2})$
and $T_{+}^{0}(b_{1},b_{2})$ are continuous functions in 
$\mathcal{R}_{1}\cap \{\varepsilon =0\}$ and differentiable with respect to
$(b_{1},b_{2})$. Let us notice that for any value of $b_{1}$:
\[
\frac{T_{-}^{0}(b_{1},0)}{T_{+}^{0}(b_{1},0)}=1
\]
This comes from the central symmetry of the dynamics with respect to the
origin when $b_{2}=0$.

Besides, for any fixed value of $b_{1}$, $T_{-}^{0}(b_{1},b_{2})$ increases
and $T_{+}^{0}(b_{1},b_{2})$ decreases as $b_{2}$ increases.
Thus:
\begin{equation}
\frac{\partial T_{-}^{0}(b_{1},b_{2})}{\partial b_{2}}T_{+}^{0}(b_{1},b_{2})-
\frac{\partial T_{+}^{0}(b_{1},b_{2})}{\partial b_{2}}T_{-}^{0}(b_{1},b_{2})>0
\label{DiffAccb2}
\end{equation}

Let us first consider $b_{1}$ fixed such that:
\[
\frac{\alpha +\mu }{4\mu ^{3}} \leq b_{1} < \frac{1}{\mu ^{2}}
\]
We can increase $b_{2}$ until $(b_{1},b_{2})$ reaches the line $\mathcal{H}_{c}$.
When $b_{2}$ tends to $2\mu -2\mu ^{3}b_{1}$, 
$T_{-}^{0}(b_{1},b_{2})$ tends to $+\infty $ whereas $T_{+}^{0}(b_{1},b_{2})$
remains finite. Together with \ref{DiffAccb2},
this proves that, for each $r\geq 1$, there exists a unique solution 
$b_{2}=l_{r}^{0}(b_{1})$ in $\mathcal{R}_{1}\cap \{\varepsilon =0\}$ of \eref{Ratio0}.
As $T_{\pm }^{0}(b_{1},b_{2})$ are $C^{1}$-functions with respect to $(b_{1}, b_{2})$, the function
$l_{r}^{0}$ is $C^{1}$ with respect to $b_{1}$.

On the other hand, let us consider the time ratio when $(b_{1},b_{2})$ lies
on the security plane $H_{p}^{-\alpha }$, i.e.:
\[
b_{1}<\frac{\alpha +\mu }{4\mu ^{3}},\qquad b_{2}=h_{p}(b_{1})-\alpha
\]
Then:
\begin{eqnarray*}
T_{-}^{0}(b_{1},h_{p}(b_{1})-\alpha )=
\int_{-2\mu }^{-\mu }\frac{g^{\prime }(X)}{X+b_{1}g(X)+\mu+2b_{1}\mu ^{3}-\alpha }dX \\
T_{+}^{0}(b_{1},h_{p}(b_{1})-\alpha )=
\int_{2\mu }^{\mu }\frac{g^{\prime }(X)}{X+b_{1}g(X)+\mu +2b_{1}\mu^{3}-\alpha }dX
\end{eqnarray*}
Let us define $\bar{b}_{1}^{r}$ as the unique solution in $b_{1}$ of the equation:
\[
\frac{T_{-}^{0}(b_{1},h_{p}(b_{1})-\alpha )}{T_{+}^{0}(b_{1},h_{p}(b_{1})-\alpha )}=r
\]
Then, if we fix $r>1$, for each $b_{1}$ such as
\[
\bar{b}_{1}^{r} \leq b_{1} \leq \frac{\alpha +\mu }{4\mu ^{3}}
\]
there exists a unique solution $b_{2}=l_{r}^{0}(b_{1})$ of \ref{Ratio0}.
\end{proof}

\begin{theorem}
There exists a $C^{1}$-foliation of $\mathcal{R}_{1}$, defined near $\varepsilon =0$, of two dimensionnal
leaves such that:\newline
1) each leaf is the graph:
\[
b_{2}=l_{r}(b_{1},\varepsilon ),\qquad b_{1}\in \left[ \bar{b}_{1}^{r},
\frac{1}{\mu ^{2}}\right[ 
\]
of a differentiable function $l_{r}$,\newline
2) for each $r\geq 1$, there is a leaf on which:
\[
\frac{T_{-}(b_{1},b_{2},\varepsilon )}{T_{+}(b_{1},b_{2},\varepsilon )}=r
\]
\end{theorem}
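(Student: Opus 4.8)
The plan is to promote the $\varepsilon=0$ foliation of the previous Proposition to a foliation of the full three-dimensional region $\mathcal{R}_{1}$ by treating $\varepsilon$ as an extra parameter and applying the implicit function theorem at each fixed $\varepsilon$. First I would fix $b_{1}\in[\bar b_{1}^{r},1/\mu^{2}[$ and $\varepsilon\in]0,\varepsilon_{0}[$ and study the ratio
\[
R(b_{1},b_{2},\varepsilon)=\frac{T_{-}(b_{1},b_{2},\varepsilon)}{T_{+}(b_{1},b_{2},\varepsilon)}
\]
as a function of $b_{2}$ alone. The integral formulas for $T_{\pm}$ from the preceding Lemma, together with the bound \eqref{MajPsi} on the correction terms $\Psi_{\pm}$, show that $T_{\pm}(b_{1},b_{2},\varepsilon)=T_{\pm}^{0}(b_{1},b_{2})+O(\varepsilon)$, so $R$ is a $C^{1}$-perturbation of the ratio $R^{0}=T_{-}^{0}/T_{+}^{0}$ already analysed.

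**Existence and uniqueness of the leaf at fixed $\varepsilon$.**
For each admissible $r\ge 1$ I want to solve $R(b_{1},b_{2},\varepsilon)=r$ for $b_{2}$. The key monotonicity input is inequality \eqref{DiffAccb2}, which at $\varepsilon=0$ states that the Jacobian $\partial R^{0}/\partial b_{2}$ is strictly positive; by the $C^{1}$-closeness of $R$ to $R^{0}$ this positivity persists for $\varepsilon$ small enough, so $b_{2}\mapsto R(b_{1},b_{2},\varepsilon)$ is strictly increasing on its domain. As $b_{2}$ is pushed up towards the homoclinic surface $\mathcal{H}_{c}:b_{2}=h_{c}(b_{1},\varepsilon)$, the minimum $X_{\min}$ approaches the left saddle and $T_{-}\to+\infty$ while $T_{+}$ stays finite, exactly as in the $\varepsilon=0$ case; at the lower boundary (the security plane $H_{p}^{-\alpha}$, or $b_{2}=0$ where the central symmetry gives $R=1$) the ratio is at most $r$. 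Monotonicity plus these boundary values yield a unique root $b_{2}=l_{r}(b_{1},\varepsilon)$ by the intermediate value theorem.

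**Regularity of the leaf.**
To get that $l_{r}$ is $C^{1}$ jointly in $(b_{1},\varepsilon)$, I would invoke the implicit function theorem on $R(b_{1},b_{2},\varepsilon)-r=0$: the function $R$ is $C^{1}$ in all three variables — differentiability in $b_{1},b_{2}$ comes from differentiating under the integral sign in the formulas for $T_{\pm}$, and differentiability in $\varepsilon$ follows from the differentiability of $\Psi_{\pm}$ and of $X_{\min},X_{\max}$ guaranteed by the preceding Lemma and by the $C^{1}$-smoothness of $\mathcal{H}_{c}$ established in the earlier Proposition — and the partial derivative $\partial R/\partial b_{2}$ is nonzero by \eqref{DiffAccb2}. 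This produces, near $\varepsilon=0$, a $C^{1}$ function $l_{r}(b_{1},\varepsilon)$ whose graph is the two-dimensional leaf, and the fact that distinct values of $r$ give disjoint leaves filling $\mathcal{R}_{1}$ (again by strict monotonicity of $R$ in $b_{2}$) is precisely the foliation claim.

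**Main obstacle.**
The delicate point is the behaviour of $T_{-}(b_{1},b_{2},\varepsilon)$ as $(b_{1},b_{2},\varepsilon)$ approaches $\mathcal{H}_{c}$, since this is where the $O(\varepsilon)$ error estimate degenerates: the remark following Lemma~\ref{Secu} warns that $\varepsilon_{0}\to 0$ as $(b_{1},b_{2})\to\mathcal{H}_{c}$, so the uniform estimates $T_{\pm}=T_{\pm}^{0}+O(\varepsilon)$ break down exactly in the regime where $T_{-}$ blows up. I therefore expect the hard part to be controlling the divergence of $T_{-}$ uniformly enough to guarantee that, for each fixed small $\varepsilon$, the increasing function $b_{2}\mapsto R(b_{1},b_{2},\varepsilon)$ still sweeps past every $r\ge 1$ before $b_{2}$ exits the region of validity $]0,\varepsilon_{0}[$; this is what forces the statement to be local near $\varepsilon=0$ and what restricts $b_{1}$ to the interval $[\bar b_{1}^{r},1/\mu^{2}[$.
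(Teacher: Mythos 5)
Your core step --- applying the implicit function theorem to $T_{-}/T_{+}-r=0$ using the positivity of the quantity in \eref{DiffAccb2} --- is exactly the paper's argument, but you and the paper anchor it differently, and the difference matters. The paper extends $T_{\pm}$ to $\varepsilon =0$ by setting $T_{\pm }(b_{1},b_{2},0)=T_{\pm }^{0}(b_{1},b_{2})$ and applies the implicit function theorem at the already-known root $(\bar{b}_{1},l_{r}^{0}(\bar{b}_{1}),0)$ on the $\varepsilon =0$ slice, with $(b_{1},\varepsilon )$ as joint parameters; this yields existence, local uniqueness and $C^{1}$ regularity of $l_{r}(b_{1},\varepsilon )$ in one stroke on a neighborhood $U_{\bar{b}_{1}}^{r}$ of $(0,\bar{b}_{1})$, and the interval $\left[ \bar{b}_{1}^{r},\frac{1}{\mu ^{2}}\right[ $ is then covered by such neighborhoods. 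Since for fixed $r$ the $\varepsilon =0$ leaf $b_{2}=l_{r}^{0}(b_{1})$ sits strictly below $\mathcal{H}_{c}^{0}$, the perturbed root stays away from the homoclinic surface for $\varepsilon $ small, so the degeneration of the estimates near $\mathcal{H}_{c}$ never enters. Your additional existence argument --- fixing $\varepsilon >0$ and sweeping $b_{2}$ up to $h_{c}(b_{1},\varepsilon )$ to force $T_{-}\rightarrow +\infty $ and then invoking the intermediate value theorem --- is precisely where your ``main obstacle'' lives, and it is not needed for the statement as formulated, which is only local near $\varepsilon =0$: the anchor point for the implicit function theorem is supplied for free by the preceding Proposition, so existence for small $\varepsilon $ follows without any uniform control of the blow-up of $T_{-}$ near $\mathcal{H}_{c}$. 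If you drop the fixed-$\varepsilon $ sweep and keep only the perturbative implicit-function step, your proof coincides with the paper's.
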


\begin{proof}
We prolong $T_{\pm }(b_{1},b_{2},\varepsilon )$
into a differentiable function at $\varepsilon =0$ by posing $T_{\pm }(b_{1},b_{2},0 )=T_{\pm }^{0}(b_{1},b_{2})$.
Let us fix $r>1$ and $\bar{b}_{1}$ such that $\bar{b}_{1}^{r} < \bar{b}_{1} < \frac{1}{\mu ^{2}}$.
Then, like in the previous proof:
\[
\frac{\partial T_{-}(\bar{b}_{1},l_{r}^{0}(b_{1}), 0)}{\partial b_{2}}T_{+}(\bar{b}_{1},l_{r}^{0}(b_{1}), 0)-
\frac{\partial T_{+}(\bar{b}_{1},l_{r}^{0}(b_{1}), 0)}{\partial b_{2}}T_{-}(\bar{b}_{1},l_{r}^{0}(b_{1}), 0)>0
\]
The implicit function theorem implies that there exists an open subset $U^{r}_{\bar{b}_{1}}$ in the $(\varepsilon ,b_{1})$-space,
$\varepsilon \geq 0$, which contains $(0,\bar{b}_{1})$ and a differentiable function $l_{r}$ such that, for each $(\varepsilon , b_{1})$ in
$U^{r}_{\bar{b}_{1}}$ :
\[
\frac{T_{-}(b_{1},b_{2},\varepsilon )}{T_{+}(b_{1},b_{2},\varepsilon )}=r
\]
admits a unique solution defined by:
\[
b_{2}=l_{r}(b_{1},\varepsilon )
\]
By recovering all values of $\bar{b}_{1}$ such that $\bar{b}_{1}^{r} < \bar{b}_{1} < \frac{1}{\mu ^{2}}$, we define
$l_{r}$ for $b_{1}$ in $\left[ \bar{b}_{1}^{r}, \frac{1}{\mu ^{2}}\right[$ and $\varepsilon_{0} $ small enough.
\end{proof}

For any prescribed ratio $r$ between the duration of the pulsatility phase and the surge duration,
we have thus shown that there exists a 2-dimensional manifold of solutions in the parameter space
$(b_{1},b_{2},\varepsilon )$, and we have provided a $O(\varepsilon ^{2/3})$ approximation of the leaf by the plane:
\[
\left\{(\varepsilon, b_{1}, b_{2})|0 \leq \varepsilon \leq \varepsilon _{0}, b_{2}=l_{r}^{0}(b_{1}), \bar{b}_{1}^{r} \leq b_{1} \leq \frac{1}{\mu
^{2}} \right\}
\]

\begin{figure}[htb]
\centering
\includegraphics[height=7cm]{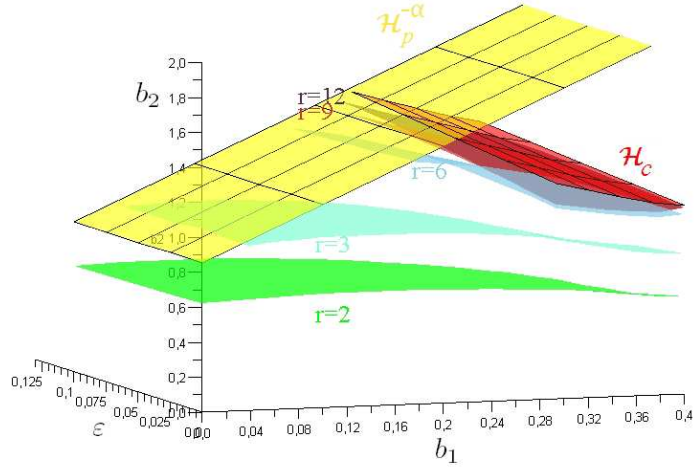}
\caption{Leaves in $(\varepsilon , b_{1}, b_{2})$-space defined by $T_{-}/T_{+}=r$, for $r=2,3,6,9,12$.
This ratio corresponds to the pulsatility phase duration over the surge duration.}
\label{Foliation}
\end{figure}

We can find numerically the relevant surface solutions for different ratios, as it is illustrated in
Figure \ref{Foliation} for $r=2,3,6,9$ and $12$. The highest $r$ is, the closest the leaf is to the surface of homoclinic connexions.

\section{Tuning of the parameters of the GnRH Secreting System}   \label{GnRHSecrSys}

In this section, we describe the process for tuning the values of  $\varepsilon $, $a_{0}$, $a_{1}$, $a_{2}$, and $c$ with respect to a definite list of quantitative specifications. We first state the necessary conditions to ensure that system \eref{Sys1} displays the alternation between pulsatility and surge described in \S \ref{QualBehav}. Using geometrical tools, we then analyze the effect of each parameter on each prescribed ratio (duration of the surge over the whole cycle duration, pulse amplitude over surge amplitude, and increase in pulse frequency from the luteal to the follicular phase). Finally, we describe the algorithmic procedure to apply in order to tune the parameter values one after another and  fulfill the specifications.

\subsection{Constraints on the parameters to keep the right order in the sequence of secretion patterns}\label{ordertuning}

We first study the parameter conditions under which the $y(t)$ signal does start to pulse right after the surge. We have to prevent the GnRH Secreting System from exploring the step 1-to-4 behaviors described in \S \ref{QualBehav}, wherever $(X,Y)$ is along the limit cycle of the Regulating System. In other words, the Hopf bifurcation (step 4) of the GnRH Secretion System has to occur for a value of $X$ such that $X \leq -2\mu $. This condition amounts to say that for $X=-2\mu$, $\dot{y}$ is positive at the extremum $P^{f}_{+}$, so that we have to impose the following inequality on $c$:
\begin{equation}
c \leq \frac{a_{0}\lambda +2a_{1}\lambda ^{3}+a_{2}}{2\mu }  \label{NonRightHopf}
\end{equation}

We also have to ensure that the pulsatility phase does not break
off before the Regulating System switches to the fast motion from $P_{-}^{g}$
to $Q_{-}^{g}$ that induces the surge. The GnRH Secreting System has to keep on oscillating long enough, so that the step 6 reverse Hopf bifurcation cannot occur for values of $X$ strictly smaller than $-\mu$. This condition reads:
\begin{equation}
c \geq \frac{-a_{0}\lambda -2a_{1}\lambda ^{3}+a_{2}}{\mu }  \label{NonLeftHopf1}
\end{equation}

The GnRH Secreting System has to remain in the surge mode (where the $\dot{y}$-nullcline lies on the right of $P_{-}^{f}$) until $X$ undergoes the fast motion between $P_{+}^{g}$ and $Q_{+}^{g}$ that brings the secreting system back to the pulsatile mode. This again imposes a condition on the reverse Hopf bifurcation that cannot occur for a value of $X$ strictly greater than $\mu $:
\begin{equation}
c \geq \frac{a_{0}\lambda +2a_{1}\lambda ^{3}-a_{2}}{\mu }  \label{NonLeftHopf2}
\end{equation}
Bringing together the two last conditions, we obtain the following constraint:
\begin{equation}
c \geq \left\vert \frac{a_{0}\lambda +2a_{1}\lambda ^{3}-a_{2}}{\mu } \right\vert  \label{Constraintc}
\end{equation}

Finally, we have to take care that the current point $(x,y)$ does cross the $\dot{y}$-nullcline from the left to the right when $(X,Y)$ undergoes the fast motion from $P_{+}^{g}$ to $Q_{+}^{g}$.
To this end, the slope of the $\dot{y}$-nullcline ($a_{0}/a_{1}$) has to be steep enough, hence the value of $a_1$ has to be small enough. Otherwise the current point $(x,y)$ might escape from control at the end of the surge (step 9 behavior described in \S \ref{QualBehav}) and go on climbing up the left branch of the cubic $y=f(x)$, on the left side of the $\dot{y}$-nullcline.

\subsection{Effect of the $\varepsilon$, $a_0$ and $a_2$ on the ratios prescribed to the secretion signal}   \label{Influ}
\paragraph{Effect of $\varepsilon$}
When $a_{1}$ is small enough, we can approximate the period of the GnRH Secreting System,
for any value of $X$ ranging between $-2\mu$ and $-\mu$, by:
\begin{equation}
\varepsilon \left( \int_{-2\lambda }^{-\lambda }\frac{f^{\prime }(x)}{%
a_{0}x+a_{1}f(x)+a_{2}+cX}dx+\int_{2\lambda }^{\lambda }\frac{f^{\prime }(x)}{%
a_{0}x+a_{1}f(x)+a_{2}+cX}dx\right)
\label{FreqPulseAccX}
\end{equation}
Under conditions \eref{NonRightHopf} and \eref{Constraintc}, this period is both well-defined and finite. We can deduce from expression \eref{FreqPulseAccX} that the pulse frequency increases as $\varepsilon$ decreases. Similarly, the surge amplitude increases as $\varepsilon$ decreases, since the $(x,y)$ point moves along the left branch of the cubic $y=f(x)$ at a $O(1/\varepsilon)$ speed, compared to the $O(1)$-speed  of the $Y$ motion, that we use as reference.

\paragraph{Effect of $a_0$ and $a_2$}The surge amplitude increases exponentially as $a_{0}$ decreases. Thanks to this exponential dependency, it is easy to find an $a_{0}$ value compatible with  condition \eref{NonRightHopf} and leading to a high-amplitude surge. $a_{2}$  controls the position of the stripe described by $\dot{y}$ in the $(x,y)$-plane, as $X$ increases from $-2\mu$ to $-\mu$.  When  $a_{2}$ increases, the stripe moves leftwards in the $(x,y)$-plane, so that both the pulse frequency and the surge amplitude increase.
\paragraph{Effect of $c$}
 $c$ controls the width of this stripe. Since conditions \eref{NonRightHopf} and \eref{Constraintc} reduce drastically the range of admissible values for $c$, they also lower its influence on the $y$ dynamics along the left branch of the cubic $y=f(x)$. Hence, changes in $c$ do not impact much on the surge. In contrast, a small change in the width of the stripe impacts a lot on the set of limit cycles followed by $(x,y)$ as $X$ increases from $-2\mu$ to $-\mu$.
Finally, the greater $c$ is (even subject to condition \eref{NonRightHopf}), the greater are the stripe width and the pulse frequency ratio between the beginning and the end of the pulsatility phase.

\subsection{Procedure for tuning the parameter values} \label{Algo}
As it is explained in \S \ref{PulseAmpl}, we have fixed once and for all the values of $\delta $, $\lambda $ and $\mu $. More precisely, $\delta=0.0125$ and the values of $\lambda$ and $\mu$ correspond to cubic functions $f$ and $g$ expressed as:
\begin{subequations}
\begin{eqnarray}
f(x)=-x^{3}+2.5x\\
g(X)=-X^{3}+4X
\end{eqnarray}
\end{subequations}

We propose here an algorithm-like procedure for tuning the other parameters in order to meet the set of quantitative specifications together. The procedure consists of the following steps:
\begin{enumerate}
\item  Fix the value of $a_{2}$, since this parameter has multiple influences on the different ratios. 
To ensure that conditions \eref{NonRightHopf} and \eref{Constraintc} will be fulfilled, we have to position the $\dot{y}$-nullcline precisely in the $(x,y)$ plan in the uncoupled case ($X=0$), so that it  separates the $P^{f}_{-}$ extremum, on its left, from the intersection point between the $x$-axis and the cubic $y=f(x)$, on its right. The corresponding $a_{2}$ values range between $\lambda $ and $\sqrt(3) \lambda $. Within this interval, $a_2$ will be even greater that the prescribed surge amplitude is high.

\item Choose the order of magnitude of  $\varepsilon$ to fit the average pulse frequency.
Assuming that $a_{1}$ is small, we may approximate the minimum period of the GnRH Secreting System, for $X$ ranging between $-2\mu$ and $-\mu$, by:
\begin{equation}
T_{min}=2\varepsilon \int_{-2\lambda }^{-\lambda }\frac{f^{\prime }(x)}{a_{0}x}dx
= \frac{\varepsilon}{a_{0}} \left( 9 \lambda ^{2}-6 \ln (2) \right)
\end{equation}
Hence, to obtain a prescribed frequency $\phi $ at the end of the pulsatility phase, we can link $a_{0}$ and $\varepsilon$ by:
\begin{equation}
T_{min}=1/\phi
\label{Freq}
\end{equation}
 \item Tune the value of $a_{0}$, subject to condition \eref{Freq}, to fit the surge amplitude
to obtain the suitable surge amplitude.
\item Find the value of $c$ consistent with the pulse frequency ratio between the beginning and the end of the pulsatile phase.
With $a_{2}$ ranging between $\lambda $ and $\sqrt(3) \lambda $, the period of the limit cycle of the GnRH Secreting System for $X=-\mu$ can be approximated by the minimum $T_{min}$, whatever the value of $c$ subject to conditions \eref{NonRightHopf} and \eref{Constraintc}.  Then, the period of the limit cycle for $X=-2\mu$ is equal to $T_{min}/\rho$, where $\rho$ denotes the frequency ratio. 
Finding $c$ thus amounts to solve the implicit equation:
\[
\int_{-2\lambda }^{-\lambda }\frac{f^{\prime }(x)}
{a_{0}x+a_{1}f(x)+a_{2}-c\mu}dx+\int_{2\lambda }^{\lambda }\frac{f^{\prime }(x)}
{a_{0}x+a_{1}f(x)+a_{2}-c\mu}dx = \frac{T_{min}}{\rho}
\]
It is worth noticing that this equation does not admit a solution in $c$ for every value of $\rho$. The greatest ratio can be reached with the following value of $c$:
\begin{equation}
c = \frac{a_{0}\lambda +2a_{1}\lambda ^{3}-a_{2}}{\mu }
\label{SurHopf}
\end{equation}

\item Define the precise value of $a_1$. We have already assumed that $a_1$ is small enough (in a sense detailed in \S\ref{ordertuning}). The precise choice of $a_1$ affects marginally the amplitude of the surge, which increases as $a_{1}$ increases

\item Deduce the values of $b_1$ and $b_2$ from the results established in \S \ref{FolRS}.
For a prescribed duration ratio, there is a 1-dimensionnal curve of solutions in the $(b_{1},b_{2})$-space. Along one such curve of constant ratio,
the smaller $b_{1}$ is, the longer $X$ remains close to $-\mu$, in comparison with the time spent near $X=-2\mu$, hence the sooner the pulse frequency increases.
This property is used to set approximately the durations of the luteal and follicular phases, even if it is not  straightforward to determine the value of $X$ (between $-2\mu$ and $-\mu$) for which the pulse frequency starts increasing drastically. 
\end{enumerate}

\section{Quantitative neuroendocrinological specifications}  \label{Appli}
\subsection{GnRH secretion along the ovarian cycle in the ovine species}

\subsubsection{Duration of ovarian cycle phases}

During the breeding season (late summer to the start of the next spring), the ewe shows ovarian (estrous) cycles of 16-17 days \cite{Gordon-book}. The cycle is divided into 2 phases, the follicular and the luteal phase. The follicular phase has a duration of 2-3 days and is characterized by increasing secretion of estradiol and onset of the LH (Luteinizing Hormone) surge (triggered by the GnRH surge). The luteal phase has a duration of 14-15 days and is characterized by the secretion of progesterone from the corpus luteum. The transition from the follicular to the luteal phase is marked by ovulation.  In the absence of pregnancy, the transition from the luteal to the follicular phase is marked by the luteolysis of the corpus luteum, allowing resumption of a new cycle.

\subsubsection{Amplitude and frequency ratios}
We first detail the specifications concerning the ovine species, since this is the species for which there are the more data available directly on
the GnRH level (rather than the LH level). Indeed, in the ewe, the development of a dedicated technique has allowed the sampling of pituitary portal
blood with high temporal resolution \cite{Moenter_90}. This technique is specially useful in studying the pattern of GnRH secretion during the surge.
It has been utilized both during the luteal and follicular phase of naturally cycling animals \cite{Moenter_91} and during artificial, estradiol-induced,
follicular phases in ovariectomized ewes \cite{Moenter_91,Moenter_92}. In any case, the LH surge induced by estradiol was invariably accompanied by a
massive GnRH surge. Depending on studies and between-animal variability, the peak values of the GnRH surge\footnote{In the cited papers, the surge
peak value is actually an average computed from the values observed during one hour after the maximum has occured. Hence, with a 10 min sampling interval,
this is an average over 6 values.} ranged from 85 to 170 (73 to 394 outside the breeding season) \cite{Moenter_90}, 9 to 109 \cite{Moenter_91} fold over
baseline, with 10 min sampling intervals, or even from 100 to 500 fold over baseline with 2 min sampling intervals \cite{Moenter_92}. This variability
can be partly explained by differences in the anatomical level where the surgical cut penetrates the portal vasculature (the higher the cut  the higher
the surge amplitude). Comparatively, the average pulse peak values can be assessed as less than 10 fold the baseline. Accordingly, we chose a surge to
pulse amplitude ratio instance of $60$ in the numerical simulation illustrated below.
These studies had also established that the GnRH pulse frequency not only was greater in the early follicular phase than in the luteal phase, but also
increased further within the follicular phase as the surge approached. More precisely, during the luteal phase, the average frequency was one pulse 
per 4 hours, while it exceeded one pulse per hour in the follicular phase. Accordingly, we considered an average pulse frequency ratio of 4 between
the luteal and follicular phase in the model.

\subsubsection{Parameter combination for the ovine cycle}
Gathering the information detailed above, we set ourselves the target of meeting the specifications detailed in Table \ref{tablesheep}. Applying the procedure described in \S\ref{GnRHSecrSys}, we deduce the set of parameters listed in Table \ref{paramsheep}. The corresponding GnRH secretion pattern is illustrated in Figure \ref{signsheep}. The zoom around the surge in Figure \ref{signsheepZS} emphasizes the difference in the pulse frequency at the end of the follicular phase (pre-surge period) compared to the beginning of the luteal phase (post-durge period).
\begin{table}[ht]
\caption{Quantitative specifications for GnRH secretion pattern along the ovarian cycle in the ewe}
\begin{center}
\begin{tabular}{l|c|c}
\hline
feature&specification&unit\\
\hline
whole cycle duration&16.5&days\\
follicular phase duration&2.5&days\\
surge duration&1&days\\
luteal phase duration&13&days\\
pulse to surge amplitude ratio&60&$-$\\
frequency  increase ratio&4&$-$\\
\hline
\end{tabular}
\end{center}
\label{tablesheep}
\end{table}
\begin{table}[ht]
\caption{Parameter values for the ovarian cycle in the ewe}
\begin{center}
\begin{tabular}{c|c|c|c}
\hline
$\varepsilon$ & $0.02$  & $\delta$ & $0.0125$ \\
  $a_{0}$     & $0.52$  & $a_{1}$  & $0.011$ \\
  $a_{2}$     & $1.14$  & $c$      & $0.70$ \\
  $b_{1}$     & $0.246$ & $b_{2}$  & $1.5103$ \\
\hline
\end{tabular}
\end{center}
\label{paramsheep}
\end{table}
\begin{figure}[H]
\centering
\includegraphics[height=6.8cm]{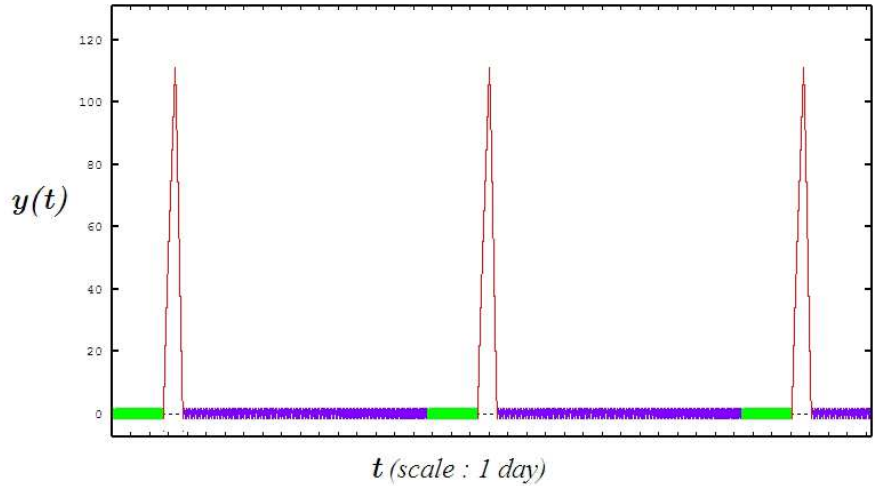}
\caption{Ovarian cycle in the ewe. $y(t)$ signal generated by \eref{Sys1} with the parameter values given in Table \ref{paramsheep}.
The time scale unit is one day. The $y$ variable has no dimension as we are interested in the pulse to surge amplitude ratio.
The signal respects the specifications for the sheep ovarian cycle :
whole cycle duration of 16.5 days, follicular phase of 2.5 days, surge duration of 1 day. The pulse to surge amplitude ratio
is around 60.}
\label{signsheep}
\end{figure}
\begin{figure}[H]
\centering
\includegraphics[height=4cm]{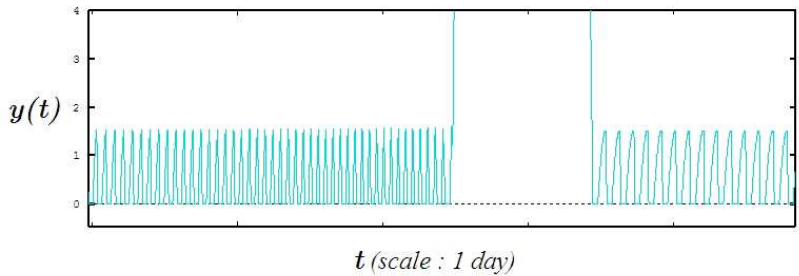}
\caption{Zoom around the surge (ewe). $y(t)$ signal generated by \eref{Sys1} with the parameter values given in Table \ref{paramsheep}.
The whole follicular phase (2.5 days) is represented here. The pulse frequency before the surge is approximatively 1 per 70 minutes.
The pulse frequency at the beginning of the luteal phase is around 1 per 2.5 hours.}
\label{signsheepZS}
\end{figure}

\subsection{GnRH secretion along the ovarian cycle in the rhesus monkey}

\subsubsection{Specifications on cycle phase duration and GnRH surge to pulse ratios}
The duration of the ovarian (menstrual) cycle in the rhesus monkey is comparable to that of the human cycle, i.e around 28 days long. Comparatively to the ovine cycle, it is much more symmetric, since the average duration of both the follicular and the luteal phase amounts to 14 days \cite{monkey-book}.

GnRH measurements in cerebrospinal fluid (CSF) samples have been obtained from the third ventricle of intact and ovariectomized conscious rhesus monkeys during control periods and throughout an estrogen challenge \cite{LXia_92}, with a time resolution of 15 min. In the intact as well as ovariectomized rhesus monkey, a genuine GnRH surge does occur in response to estradiol and the profile of the GnRH surge is remarkably similar to that reported in the ewe. These semi-quantitative information are quite reliable since simultaneous measurements of GnRH in CSF and portal blood in the ewe have shown that there is a good agreement between both techniques at the time of the GnRH surge \cite{Skinner_97}. However, it is more difficult to extract accurate quantitative information from these data, since the technique of collecting GnRH in the CSF is less reliable than the sampling of pituitary portal blood, due to difficulties in maintaining the required CSF flow uninterruptedly for a long time and documenting the position of the tip of the collecting cannula. Moreover, the whole surge duration was not entirely covered by the collecting period in several monkeys, so that maximal values may have been missed. Considering the maximal observed GnRH CSF concentration (around 400 pg/ml) and a median pulse amplitude of 16 pg/ml, we can however fix the pulse to surge amplitude ratio for instance to 25.

\begin{table}[H]
\caption{Quantitative specifications for GnRH secretion pattern along the ovarian cycle in the rhesus monkey}
\begin{center}
\begin{tabular}{l|c|c}
\hline
feature&specification&unit\\
\hline
whole cycle duration&28&days\\
follicular phase duration&13&days\\
surge duration&1&days\\
luteal phase duration&14&days\\
pulse to surge amplitude ratio&25&$-$\\
frequency  increase ratio&4&$-$\\
\hline
\end{tabular}
\end{center}
\label{tablerhesus}
\end{table}

\subsubsection{Parameter combination for the rhesus monkey cycle}
Gathering the information detailed above, we set ourselves the target of meeting the specifications detailed in Table \ref{tablerhesus}. Applying the procedure described in \S\ref{GnRHSecrSys}, we deduce the set of parameters listed in Table \ref{paramrhesus}. The corresponding GnRH secretion pattern is illustrated in Figure \ref{signrhesus}. One can notice the difference in the length of the follicular phase compared to that of the ovarian cycle in the ewe. The zoom around the surge in Figure \ref{signrhesusZS} emphasizes the difference in the pulse frequency at the end of the follicular phase (pre-surge period) compared to the beginning of the luteal phase (post-durge period).

\begin{table}[ht]
\caption{Parameter values for the ovarian cycle in the rhesus monkey}
\begin{center}
\begin{tabular}{c|c|c|c}
\hline
$\varepsilon$ & $0.018$  & $\delta$ & $0.0125$ \\
  $a_{0}$     & $0.7$    & $a_{1}$  & $0.013$ \\
  $a_{2}$     & $1.0$    & $c$      & $0.67$ \\
  $b_{1}$     & $0.187$  & $b_{2}$  & $1.704$ \\
\hline
\end{tabular}
\end{center}
\label{paramrhesus}
\end{table}

\begin{figure}[h]
\centering
\includegraphics[width=12.2cm]{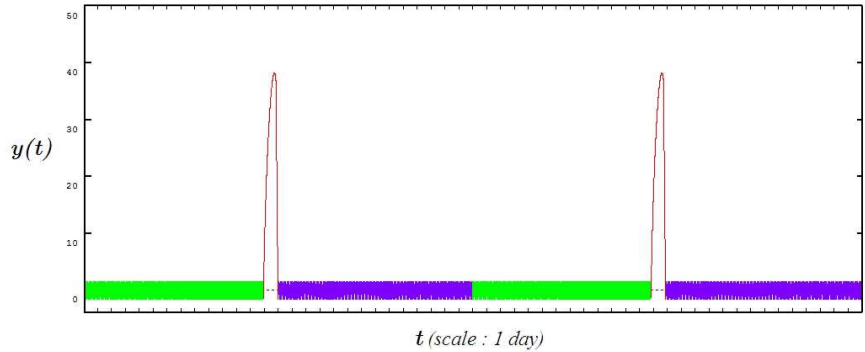}
\caption{Ovarian cycle in the rhesus monkey. $y(t)$ signal generated by \eref{Sys1} with the parameter values given in Table \ref{paramrhesus}.
The time scale unit is one day. The $y$ variable has no dimension as we are interested in the pulse to surge amplitude ratio.
The signal respects the specifications for the rhesus monkey ovarian cycle :
whole cycle duration of 28 days, follicular phase of 13 days, surge duration of 1 day. The pulse to surge amplitude ratio
is around 25.}
\label{signrhesus}
\end{figure}

\begin{figure}[H]
\centering
\includegraphics[width=12cm]{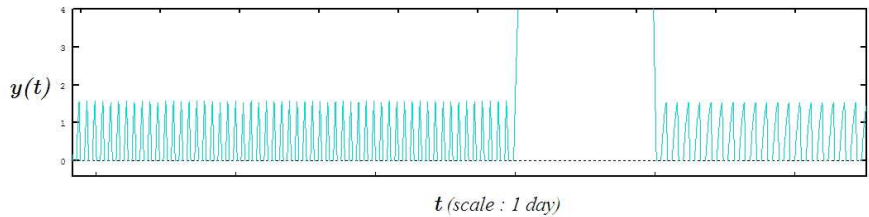}
\caption{Zoom around the surge (rhesus monkey).$y(t)$ signal generated by \eref{Sys1} with the parameter values given in Table \ref{paramrhesus}.
The three last days of the follicular phase are represented here. The pulse frequency before the surge is approximatively 1 per 80 minutes.
The pulse frequency at the beginning of the luteal phase is around 1 per 2 hours.}
\label{signrhesusZS}
\end{figure}

\section{Conclusion and discussion}

We have based our study on a concise model reproducing the different GnRH secretory patterns along an ovarian cycle \cite{fc-jpf_07}.
In this model, the dynamical pattern of GnRH secretion results from the interaction between two neuronal populations: GnRH secreting
neurons and regulating neurons, whose paragon can be embodied by Kiss-peptin neurons \cite{dungan_06}. The latter population integrates
much of the ovarian steroid feedback and acts as a slow pacemaker, while the former one is part
time excitable, part time fast oscillating. The alternation of pulsatile and surge mode for GnRH secretion, as well as pulse frequency increase
as the surge onset gets closer, have been explained in the framework of bifurcation theory.

In this paper, we have conducted a deeper analysis of this model, with the definite aim of constraining the model outputs (and more precisely
the output variable corresponding to GnRH secretion), with respect to a physiologically relevant list of specifications. In other words, we have
challenged the ability of the model to meet precise quantitative relations on the secretion signal features. Apart from the total duration of the
ovarian cycle, which is expressed in physical time, these relations can all be expressed as ratios, regarding (i) surge duration over the whole cycle duration, (ii) the duration of the luteal phase
over that of the follicular phase, (iii) pulse amplitude over surge amplitude, and (iv) pulse
frequency in luteal phase compared to follicular phase.

We have described in great details the sequence of bifurcations undergone by the uncoupled FitzHugh-Nagumo subsystems, beyond the usually-investigated
situation where the slope of the slow variable nullcline is steep. Using singular perturbation theory and dynamical analysis, we have formulated a $\varepsilon$-expansion of the homoclinic bifurcation surface in the $(b_1,b_2,\varepsilon)$ space. From this expansion, we have been able to restrict the space of parameter values search precisely, without reducing the set of reachable ratios. Within this restricted space, we have described a foliation, whose leaves define constant duration ratios between the surge and whole cycle. From then on, we have used this foliation to determine a sufficient condition to fulfill the specification on such a ratio. This condition links together the values of 3 parameters $(b_1,b_2,\varepsilon)$ over the 7 to be fixed in the adimensionned form of the model, that we have rewritten in the beginning of the article. The remaining parameters can be further
tuned according to an algorithm taking into account the other prescribed ratios. We have managed to cope with all the ratios, even if the frequency
increase is structurally limited in the framework of the model.

We have finally applied our results to reproduce the GnRH secretion pattern in two different species in which GnRH data are both directly available
and reliable. The ovine species exemplifies an estrous cycle, whose most obvious sign consists of heat behavior, while the rhesus monkey exemplifies
a menstrual cycle, whose most obvious sign consists of menstruation. The former species is mostly interesting from an agronomic viewpoint, while the
latter is interesting from a comparative physiology viewpoint, since its ovarian cycle is the closest to the human cycle. Even in species for which
fewer GnRH data are available, we can make use of our parameter search algorithm. By instance, if we assume that the GnRH secretion dynamics in cows
is comparable to that observed in ewes, as suggested in \cite{Gazal_98}, and adapt the durations of the follicular and luteal phases to values appropriate
for the cow \cite{Gordon-book-2}, we can bend the ovine GnRH signal into a bovine-like signal. 
In the same spirit, we plan to represent various physiological and pathological situations from our deep understanding of the model behavior.

From a dynamical viewpoint, this study indirectly addresses the questions of tracking homoclinic connexions, where classical Canard cycles disappear, and Bogda\-nov-\-Takens like bifurcations of Relaxation Oscillators that occur on a parameter control manifold. Future work will also focus on determining whether the whole system admits a strictly periodic attractive orbit. It is a challenging question dealing with the synchronization of weakly coupled oscillators and delay to bifurcation analysis.

\bibliographystyle{siam}
\bibliography{GNRH}
 \end{document}